\setlist[itemize]{noitemsep}
\setlist[enumerate]{noitemsep}
\algrenewcommand\algorithmicrequire{\textbf{Input:}}
\algrenewcommand\algorithmicensure{\textbf{Output:}}
\pgfplotsset{compat=1.15}
\numberwithin{equation}{section}
\theoremstyle{plain}
\newtheorem{theorem}{Theorem}[section]
\newtheorem{proposition}[theorem]{Proposition}
\newtheorem{corollary}[theorem]{Corollary}
\newtheorem{lemma}[theorem]{Lemma}
\newtheorem{notation}[theorem]{Notation}
\theoremstyle{definition}
\newtheorem{definition}[theorem]{Definition}
\newtheorem{remark}[theorem]{Remark}
\newtheorem{example}[theorem]{Example}
\newtheorem{question}{Question}
\newcommand\restr[2]{{
  \left.\kern-\nulldelimiterspace 
  #1 
  \vphantom{\big|} 
  \right|_{#2} 
  }}
\newcommand{\size}[1]{\left| #1 \right|} 
\newcommand{\ORmn}{\textup{OR}_{d}(K_{m,n})}
\newcommand{\Fr}{\textup{Fr}}
\newcommand{\Real}{\textup{Real}}
\newcommand*\closure[1]{\overline{#1}}
\renewcommand{\tilde}[1]{\widetilde{#1}}
\newcommand{\vv}{\mathbf{v}}
\newcommand{\OR}{\textup{OR}_{d}(\closure{G})}
\newcommand{\3}{\textup{OR}_{3}(\closure{G})}
\newcommand{\4}{\textup{OR}_{4}(\closure{G})}
\newcommand{\Ugn}{U_{G_{[n]\setminus S}}}
\DeclareMathOperator{\rank}{rk}
\DeclareMathOperator{\im}{im}
\DeclareMathOperator{\sspan}{span}
\DeclareMathOperator{\CC}{\mathbb{C}}
\newtheorem{theoremA}{Theorem}
\newtheorem{theoremB}{Theorem}
\newtheorem{theoremC}{Theorem}
\title{\vspace*{-2em} Lovász--Saks--Schrijver Ideals and the Irreducible Components of the Variety of Orthogonal Representations of a Graph}
\author{Emiliano Liwski}
\affil{\textsubscript{Department of Mathematics, KU Leuven, Celestijnenlaan 200B, B-3001 Leuven, Belgium}}
\date{\vspace*{-.5em}\today\vspace*{-1.5em}}
\begin{document}
\maketitle

\begin{abstract}
Given a finite simple graph $G$ and a positive integer $d$, one can associate to $G$ the Lovász--Saks--Schrijver ideal $L_{G}(d)$, an ideal generated by quadratic polynomials coming from orthogonality conditions. The corresponding variety $\mathbb{V}(L_{G}(d))$, denoted $\OR$, is the variety of orthogonal representations of the complement graph $\overline{G}$: its points are maps from the vertex set of $G$ to $\mathbb{K}^{d}$ that send adjacent vertices of $G$ to orthogonal vectors. 
In this paper we study the irreducible decomposition of $\OR$ and the primary decomposition of $L_{G}(d)$. Our main focus is the case in which $G$ is a forest. Under this assumption, we determine the irreducible components of $\OR$, compute their dimensions, and describe their defining equations, thereby obtaining the primary decomposition of $L_{G}(d)$. The key ingredient is a matroid-theoretic framework in which we associate to every forest $G$ a paving matroid $\mathcal{M}(G)$.
\end{abstract}

\textbf{MSC:} 05E40, 05C90, 05B35, 05C05, 05E99.

\textbf{Keywords:} LSS ideals, orthogonal representation of graphs, irreducible components,
 primary decomposition, matroids.

\bigskip
\noindent\textbf{\bf Acknowledgement.}  
This work was supported by the PhD fellowship 1126125N and partially funded by the FWO grants G0F5921N (Odysseus), G023721N, and the KU Leuven grant iBOF/23/064.

{\hypersetup{linkcolor=black}
\setcounter{tocdepth}{2}
\setlength\cftbeforesecskip{1.1pt}
\tableofcontents}

\section{Introduction}

\paragraph{Motivation.}

Orthogonal representations of graphs were introduced by Lov\'asz in 1979 \cite{lovasz1979shannon} as a tool for studying the Shannon capacity of graphs. Subsequent works have shown that such representations are closely linked to several fundamental combinatorial properties of graphs. An orthogonal representation assigns to each vertex of a graph $G$ a vector in $\mathbb{K}^{d}$ such that vectors corresponding to non-adjacent vertices are orthogonal.

The set of all orthogonal representations of a graph $G$ on $[n]$ forms an algebraic variety in $\mathbb{K}^{nd}$. Its defining equations encode the orthogonality constraints and give rise to the \emph{Lovász--Saks--Schrijver ideals} (LSS ideals). The study of these ideals lies at the intersection of commutative algebra and combinatorics. As discussed in \cite{conca2019lovasz}, they are also naturally connected to classical topics such as coordinate sections of determinantal varieties.

\paragraph{LSS ideals.}

We recall the definition of Lovász--Saks--Schrijver ideals, following the conventions of \cite{conca2019lovasz}, where points on the variety of the LSS ideal correspond to orthogonal representations of the \emph{complement} of the graph. Recall that the complement of a graph $G$ is the graph $\overline{G}$ on the
same vertex set, where $\{i,j\}\in E(\overline{G})$ if and only if
$\{i,j\}\notin E(G)$.

\begin{definition}\label{def: LSS ideals}
Let $d$ be a positive integer.
Given a simple graph $G=([n],E)$ with vertex set $[n]=\{1,\ldots,n\}$, consider the $n\times d$ matrix of indeterminates $X=(x_{i,j})$, and let $\mathbb{K}[X]$ denote the corresponding polynomial ring.
For each edge $e=\{i,j\}\in E$, define
\begin{equation}\label{polynomials fe}
f_{e} := \sum_{k=1}^{d} x_{i,k} x_{j,k}.
\end{equation}
The \emph{Lovász--Saks--Schrijver ideal} (or \emph{LSS ideal}) of $G$ in dimension $d$ is
\[
L_{G}(d) = (f_{e}: e\in E)\subset \mathbb{K}[X].
\]
We write $\OR := \mathbb{V}(L_{G}(d))\subset \mathbb{K}^{nd}$ for the associated variety.
A point $\mathbf{v}\in \OR$ corresponds to an $n$-tuple of vectors $(v_{1},\ldots,v_{n})\in(\mathbb{K}^{d})^{n}$ satisfying the orthogonality relations $v_{i}\cdot v_{j}=0$ whenever $\{i,j\}\in E$.
\end{definition}

The foundational work on $L_{G}(d)$ and $\OR$ is due to \cite{lovasz1989orthogonal}. In that work, the authors studied the subvariety (possibly empty) given by the closure of orthogonal representations in \emph{general position}, that is, representations in which any $d$ vectors are linearly independent. They proved that this subvariety is irreducible and nonempty precisely when $\overline{G}$ is $(n-d)$-connected.

\paragraph{Main question.}

The central question addressed in this work is the following.

\begin{question}\label{main question}
Determine the irreducible decomposition of $\OR$ and the primary decomposition of $L_{G}(d)$ when $G$ is a forest and $d\geq 3$.
\end{question}

Throughout, we work over the field $\mathbb{K}=\mathbb{C}$, assume $d\geq 3$, and take $G$ to be a forest. We answer Question~\ref{main question} over $\mathbb{C}$ and compute the dimensions of the irreducible components of $\OR$.

\paragraph{Related work.}

When $d=1$, the ideal $L_{G}(d)$ reduces to the monomial \emph{edge ideal} of $G$, a well-studied object (see \cite{morey2012edge}).
For $d=2$, the primary decomposition and radicality of $L_{G}(2)$ were determined in \cite{herzog2015ideal}: the ideal is radical over fields of characteristic $\neq 2$, and over fields of characteristic $2$ it is radical exactly when $G$ is bipartite.
If the field contains $\sqrt{-1}$, then $L_{G}(2)$ becomes isomorphic to either the permanental edge ideal $\Pi_{G}$ \cite[Section~3]{herzog2015ideal} or, when $G$ is bipartite, the binomial edge ideal of $G$ \cite[Remark~1.5]{herzog2015ideal}.

Parity binomial edge ideals, studied in \cite{kahle2016parity}, coincide with LSS ideals in characteristic $\neq 2$ after a change of variables; their minimal primes and mesoprimary decompositions are described there.

The work \cite{conca2019lovasz} provided the first examples of non-radical ideals $L_{G}(d)$ for $d>2$, introduced the invariant $\mathrm{pmd}(G)$, and showed that $L_{G}(d)$ is a radical complete intersection for $d\geq\mathrm{pmd}(G)$ and prime for $d\geq\mathrm{pmd}(G)+1$. They further related LSS ideals to coordinate sections of determinantal ideals.

For forests, \cite{conca2019lovasz} proved that $L_{G}(d)$ is prime whenever $d$ is greater than the maximum degree of a vertex of $G$, resolving the decomposition problem for sufficiently large $d$.
Our work focuses instead on determining the \emph{primary decomposition} of $L_{G}(d)$ for all $d\geq 3$ over $\mathbb{C}$.

Several additional papers explore algebraic properties of LSS ideals: complete intersection and primeness, regularity, normality, Gr\"obner bases, and connections with tensors; see
\cite{kapon2019singularity, tolosa2025normality, gharakhloo2023hypergraph, kumar2021lovasz, amalore2025almost, dg2025grobner}.

\paragraph{Outline of the strategy.} 
Let $G$ be a forest graph on $[n]$. Our approach relies on two constructions associated to $G$:  
(1) the varieties $V_{S}$ for subsets $S\subseteq [n]$, and  
(2) the paving matroid $\mathcal{M}(G)$, defined below.

\begin{definition}[Definition~\ref{def: US and VS}]
For $S\subseteq [n]$, set
\[
U(S):=\Bigl\{\mathbf{v}=(v_{1},\ldots,v_{n})\in \OR \,\Big|\, v_{i}=0 \ \text{if and only if}\ i\in S\Bigr\}\subset (\CC^{d})^{n},
\]
and let $V_{S}:=\overline{U(S)}\subset \CC^{dn}$ be its Zariski closure.
\end{definition}

As shown in Proposition~\ref{prop: redundant decomposition}, each $V_{S}$ is irreducible, and the union $\cup_{S\subseteq [n]} V_{S}$ gives an irreducible but redundant decomposition of $\OR$. To extract its maximal components, we introduce a matroid associated to $G$.

\begin{definition}[Definition~\ref{def: MG}]
The matroid $\mathcal{M}(G)$ is the rank-$d$ paving matroid whose dependent hyperplanes are exactly the sets
\[
N_{G}(i)\qquad\text{for all }i\in[n]\text{ with }|N_{G}(i)|\ge d,
\]
where $N_{G}(i)$ denotes the neighborhood of $i$ in $G$.
\end{definition}

The key interaction between the varieties $V_{S}$ and the matroid $\mathcal{M}(G)$ is the following: for a generic point $\mathbf{v}=(v_{1},\ldots,v_{n})\in V_{S}$, the realizable matroid associated to the nonzero vectors $(v_{i}:i\notin S)$—that is, the matroid whose dependent sets reflect the linear dependencies among them—is precisely $\mathcal{M}(G_{[n]\setminus S})$; see Lemma~\ref{lem: is dense}. This allows us to identify the maximal varieties in the redundant decomposition: they correspond exactly to the admissible subsets introduced in Definition~\ref{def: admissible}.

\medskip
\noindent
\textbf{\large Our contributions.}  
We now summarize the main results of this paper.  
All statements concern the irreducible decomposition and the dimensions of the components of $\OR$ for a forest $G$, as well as the primary decomposition of the associated LSS ideal $L_{G}(d)$.

\begin{theoremA}
Let $G$ be a forest on $[n]$. The irreducible decomposition of $\OR$ is
\[
\OR = \bigcup_{S} V_{S}, \tag{Theorem~\ref{main theorem}}
\]
where the union ranges over all $G$-admissible subsets of $[n]$, that is, all subsets $S\subseteq [n]$ such that for every $i\in S$,
\[
\bigl|N_G(i)\cap([n]\setminus S)\bigr| \ge d
\quad\text{and}\quad
N_G(i)\cap([n]\setminus S)\not\subseteq 
N_G(j)\cap([n]\setminus S)
\ \text{for every } j\in [n]\setminus S,
\]
as in Definition~\ref{def: admissible}.
\end{theoremA}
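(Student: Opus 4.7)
The plan is to start from the redundant irreducible decomposition $\OR=\bigcup_{S\subseteq[n]}V_S$ of Proposition~\ref{prop: redundant decomposition} and prune it by deciding which $V_S$ are genuinely maximal. Since $V_S=\overline{U(S)}$ is the closure of the stratum of configurations with zero pattern \emph{exactly} $S$, a generic point of $V_S$ has that zero pattern, so any inclusion $V_S\subseteq V_{S'}$ forces $S'\subseteq S$. The key tool is Lemma~\ref{lem: is dense}: for a generic $\mathbf{v}\in V_S$, the nonzero coordinates $(v_i:i\notin S)$ realize the matroid $\mathcal{M}(G_{[n]\setminus S})$, so linear-algebraic questions about these vectors reduce to rank computations in that matroid. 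The two admissibility conditions are precisely the combinatorial translation of the requirement that each $N_G(i)\cap([n]\setminus S)$ with $i\in S$ has rank $d$ in $\mathcal{M}(G_{[n]\setminus S})$.

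For the ``non-admissible is redundant'' direction, suppose $S$ fails admissibility at some $i\in S$: either $|N_G(i)\cap([n]\setminus S)|<d$, or $N_G(i)\cap([n]\setminus S)\subseteq N_G(j)\cap([n]\setminus S)$ for some $j\in[n]\setminus S$. In both cases $N_G(i)\cap([n]\setminus S)$ has rank at most $d-1$ in $\mathcal{M}(G_{[n]\setminus S})$, so by Lemma~\ref{lem: is dense} the vectors $\{v_k:k\in N_G(i)\cap([n]\setminus S)\}$ fail to span $\CC^d$ for generic $\mathbf{v}\in V_S$. Picking $0\neq w\in\CC^d$ orthogonal to all of them and defining $\mathbf{v}_\epsilon$ by replacing $v_i=0$ with $\epsilon w$, the orthogonality conditions of $\OR$ are preserved since every edge of $G$ incident to $i$ has its other endpoint either in $S$ (where the vector vanishes) or in $N_G(i)\cap([n]\setminus S)$ (where the vector is orthogonal to $w$); all other edges are unaffected. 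For generic small $\epsilon\neq 0$, $\mathbf{v}_\epsilon\in U(S\setminus\{i\})$, so letting $\epsilon\to 0$ gives $\mathbf{v}\in V_{S\setminus\{i\}}$, and by irreducibility $V_S\subseteq V_{S\setminus\{i\}}$. Iterating strictly decreases $|S|$, so this absorbs $V_S$ into $V_{S^*}$ for some admissible $S^*\subseteq S$.

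For the converse, suppose $S$ is admissible and fix $S'\subsetneq S$; I must show $V_S\not\subseteq V_{S'}$. Take a generic $\mathbf{v}\in U(S)$. By admissibility, each $N_G(i)\cap([n]\setminus S)$ with $i\in S$ has rank $d$ in $\mathcal{M}(G_{[n]\setminus S})$, so Lemma~\ref{lem: is dense} ensures $\{v_j:j\in N_G(i)\cap([n]\setminus S)\}$ spans $\CC^d$. Suppose for contradiction $\mathbf{v}\in V_{S'}=\overline{U(S')}$; by the curve selection lemma pick an analytic arc $\mathbf{v}(t)\in U(S')$ with $\mathbf{v}(0)=\mathbf{v}$. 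Fix $i\in S\setminus S'$; then $v_i(t)\not\equiv 0$ yet $v_i(0)=0$, and after a suitable Puiseux reparametrization the normalized direction $w(t)=v_i(t)/\norm{v_i(t)}$ extends continuously to a unit vector $w\in\CC^d$ at $t=0$. The orthogonality relations $v_i(t)\cdot v_j(t)=0$ for $j\in N_G(i)$ pass to $w\cdot v_j=0$ in the limit; combined with $v_j=0$ for $j\in S\setminus\{i\}$, this forces $w\perp v_j$ for every $j\in N_G(i)\cap([n]\setminus S)$, contradicting the spanning property. Hence $V_S\not\subseteq V_{S'}$, so $V_S$ is maximal, and $\bigcup_{S\text{ admissible}}V_S$ is the irredundant irreducible decomposition of $\OR$.

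The main technical obstacle is the limiting step in the admissible direction: extracting a well-defined unit direction $w$ from $v_i(t)/\norm{v_i(t)}$ requires either curve selection with a Puiseux reparametrization, or, alternatively, a dimension count showing $\dim V_S>\dim V_{S'}$ whenever $S'\subsetneq S$ is proper and $S$ is admissible. Given the matroid-theoretic framework, the second route is likely the cleanest: the dimension of $V_S$ should be readable off $\mathcal{M}(G_{[n]\setminus S})$ via Lemma~\ref{lem: is dense}, and admissibility ought to furnish a strict dimension jump when any vertex is removed from $S$. Either approach turns the combinatorial hypothesis into the geometric non-containment needed to conclude.
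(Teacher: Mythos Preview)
Your treatment of the non-admissible direction matches the paper's Proposition~\ref{prop: maximal components}: both perturb $v_i=0$ to $\lambda w$ with $w$ orthogonal to $\{v_k:k\in N_G(i)\cap([n]\setminus S)\}$ and land in $U(S\setminus\{i\})$.

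For the admissible direction your argument is correct but takes a heavier route than the paper. You invoke real-analytic curve selection to produce an arc $\mathbf{v}(t)\in U(S')$ with $\mathbf{v}(0)=\mathbf{v}$ and extract a limiting direction of $v_i(t)$ (this does work: since $v_i(t)\not\equiv 0$ one factors $v_i(t)=t^{k}(w+O(t))$ with $w\neq 0$, so no Puiseux reparametrization is actually needed), then contradict the spanning of $\{v_j:j\in N_G(i)\cap([n]\setminus S)\}$. The paper bypasses all analysis by applying Lemma~\ref{lem: more dependent} to $G_{[n]\setminus S'}$ rather than to $G_{[n]\setminus S}$: the dependencies of $\mathcal{M}(G_{[n]\setminus S'})$ hold for every $\mathbf{w}\in U(S')$ and hence, since linear dependence of a fixed tuple is Zariski-closed, for every $\mathbf{w}\in V_{S'}$. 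Because $i\in[n]\setminus S'$, any $d$-subset $\{i_1,\dots,i_d\}\subseteq N_G(i)\cap([n]\setminus S)$ lies in the dependent hyperplane $N_{G_{[n]\setminus S'}}(i)$ of $\mathcal{M}(G_{[n]\setminus S'})$, forcing $\{v_{i_1},\dots,v_{i_d}\}$ to be linearly dependent for every $\mathbf{v}\in V_{S'}$; but for $\mathbf{v}\in F(S)$ these vectors were chosen independent by admissibility, a direct contradiction. This is a purely algebraic one-step argument that never leaves the Zariski topology, whereas your version buys the same contradiction at the cost of the curve-selection lemma and a limiting computation; the dimension-count alternative you sketch also works (admissibility gives $\dim V_S\geq\dim V_{S'}$ for $S'\subsetneq S$, forcing $V_S=V_{S'}$ and then a contradiction from the disjoint dense opens $U(S)$, $U(S')$), but is likewise more circuitous than the paper's direct use of closedness plus Lemma~\ref{lem: more dependent}.
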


We also determine the dimensions of the irreducible components of $\OR$.

\begin{theoremB}
Let $S$ be a $G$-admissible subset of $[n]$. Then
\[
\dim(V_{S}) \;=\; d(n-\lvert S\rvert)\;-\; \lvert E(G_{[n]\setminus S})\rvert.
\tag{Theorem~\ref{thm: dim of components}}
\]
\end{theoremB}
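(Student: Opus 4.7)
Let $G' := G_{[n]\setminus S}$; it is itself a forest. Let $W_S \cong \CC^{d(n-|S|)}$ denote the coordinate subspace of $\CC^{dn}$ cut out by $v_i=0$ for $i\in S$. By construction $V_S\subseteq W_S$, and inside $W_S$ the only nontrivially defining equations $f_e$ of $L_G(d)$ are those attached to edges $e\in E(G')$; accordingly $V_S\subseteq \mathbb{V}(L_{G'}(d))$ under the identification $W_S\cong \CC^{d(n-|S|)}$. The plan is to exhibit a specific point $v\in U(S)$ at which $\mathbb{V}(L_{G'}(d))$ is smooth of the expected dimension $d(n-|S|)-|E(G')|$; combining this with the irreducibility of $V_S$ (Proposition~\ref{prop: redundant decomposition}) will then force $V_S$ to coincide with the unique component of $\mathbb{V}(L_{G'}(d))$ through $v$.

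To construct $v$ I would process $G'$ tree by tree. Writing $G'$ as the disjoint union of its tree components $T_1,\ldots,T_k$, I root each $T_\ell$ at some $r_\ell$, assign $v_{r_\ell}\in\CC^d$ a generic nonzero vector, and then traverse $T_\ell$ outward: at each child $i$ of an already-assigned vertex $p(i)$, pick $v_i$ generically in the hyperplane $v_{p(i)}^\perp\subset\CC^d$. Because $d\geq 3$ and $v_{p(i)}\neq 0$, this hyperplane has dimension $d-1\geq 2$, so the generic $v_i$ is nonzero. Filling in $v_i=0$ for $i\in S$ produces $v\in U(S)\subseteq V_S$ satisfying every orthogonality constraint.

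Next, at this $v$, consider the Jacobian $J=(\partial f_e/\partial x_{i,k})$ with rows indexed by $e\in E(G')$ and columns by $(i,k)$ for $i\notin S$ and $k\in[d]$; the row of $e=\{i,j\}$ has entries $v_{j,k}$ in columns $(i,k)$ and $v_{i,k}$ in columns $(j,k)$. I would show $\rank J=|E(G')|$ by a leaf-peeling induction on $|E(G')|$: if $\ell$ is a leaf of $G'$ and $e_0=\{\ell,p\}$ its unique incident edge, the $d$ columns $(\ell,k)$ vanish in every row except that of $e_0$, where they record the coordinates of $v_p$; since $v_p\neq 0$, the row of $e_0$ is linearly independent of the rest, and deleting that row together with those columns produces the Jacobian of the smaller forest $G'-\ell$ at the corresponding reduced point. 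The induction closes, so $\mathbb{V}(L_{G'}(d))$ is smooth at $v$ of local dimension $d(n-|S|)-|E(G')|$, and the unique irreducible component $Y\subseteq \mathbb{V}(L_{G'}(d))$ containing $v$ has this dimension.

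The irreducibility of $V_S$ together with $v\in V_S\cap Y$ yields $V_S\subseteq Y$. Conversely, after zero-padding, $U(S)=\mathbb{V}(L_{G'}(d))\cap\{v_i\neq 0 : i\notin S\}$ is open in $\mathbb{V}(L_{G'}(d))$, so $U(S)\cap Y$ is a nonempty open subset of the irreducible variety $Y$, hence dense in $Y$; closing up gives $Y\subseteq\overline{U(S)}=V_S$. Therefore $V_S=Y$ and $\dim V_S=d(n-|S|)-|E(G')|$. The delicate step is this final identification: a priori $V_S$ could spread across several components of $\mathbb{V}(L_{G'}(d))$ of varying dimensions, and it is precisely the irreducibility of $V_S$ from Proposition~\ref{prop: redundant decomposition} that rules this out. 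By contrast, the Jacobian calculation itself is purely combinatorial and uses only that $G'$ is a forest, while the admissibility of $S$ plays no direct role in the dimension count, entering only indirectly via Theorem A (which guarantees that $V_S$ is a maximal component of $\OR$).
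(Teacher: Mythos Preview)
Your argument is correct, but it follows a genuinely different route from the paper. The paper proves a separate lemma (Lemma~\ref{dim UG}) computing $\dim(U_{G})=dn-|E(G)|$ directly by induction on $n$, reusing the $(\CC^{d-1})^{\ast}$-fibration $\pi:U_G\to U_{G_{[n]\setminus\{i\}}}$ already established in the proof of Lemma~\ref{lem: nonempty and irreducible}: peeling off a leaf or an isolated vertex adds $d-1$ or $d$ to the dimension, matching the loss of one or zero edges. The theorem then follows in one line from $U(S)\cong U_{G_{[n]\setminus S}}$.

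Your approach instead shows that $\mathbb{V}(L_{G'}(d))$ is a smooth complete intersection at a well-chosen point of $U(S)$ via a Jacobian leaf-peeling, and then pins $V_S$ down as the unique component through that point using the irreducibility from Proposition~\ref{prop: redundant decomposition}. This is longer but yields extra information the paper does not state explicitly: generic smoothness of $V_S$ and the fact that the $f_e$ with $e\in E(G')$ form a regular sequence near such points. The paper's route is more economical because the fibration already encodes the dimension count; your route trades that economy for a self-contained Jacobian computation that does not revisit the fibration (though you still invoke its consequence, irreducibility, at the end). Your closing remark that admissibility of $S$ plays no role in the dimension formula is also correct and worth noting: the paper's Lemma~\ref{dim UG} likewise makes no use of it.
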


Finally, we obtain the primary decomposition of the LSS ideal $L_{G}(d)$.

\begin{theoremC}
Let $G$ be a forest. Then the primary decomposition of $L_{G}(d)$ is
\[
L_{G}(d)
\;=\;
\bigcap_{S}
\ \bigl( (x_{i,j} : i \in S)\;+\;\sqrt{I_{G_{[n]\setminus S}}} \bigr),\tag{Theorem~\ref{thm: primary decomposition}}
\]
where the intersection ranges over all $G$-admissible subsets of $[n]$.  
For a forest $G$, the ideal $I_{G}$ is defined in Definition~\ref{gm}.
\end{theoremC}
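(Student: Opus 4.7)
The plan is to combine the irreducible decomposition of $\mathrm{OR}_d(\overline{G})$ from Theorem~A with an explicit identification of the defining prime of each component $V_S$, and then upgrade the resulting radical decomposition into a primary one by showing that $L_G(d)$ is itself radical.

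First, fix a $G$-admissible subset $S \subseteq [n]$ and describe the prime ideal $P_S := I(V_S)$. Since $V_S = \overline{U(S)}$ and every point of $U(S)$ satisfies $v_i = 0$ precisely when $i \in S$, every linear form $x_{i,j}$ with $i \in S$ lies in $P_S$. In the remaining variables $\{x_{i,j} : i \notin S\}$, the variety $V_S$ is by construction the principal component $V_\emptyset$ of the smaller variety $\mathrm{OR}_d(\overline{G_{[n]\setminus S}})$, whose vanishing ideal, using Definition~\ref{gm}, equals $\sqrt{I_{G_{[n]\setminus S}}}$. Hence $P_S = (x_{i,j} : i \in S) + \sqrt{I_{G_{[n]\setminus S}}}$, and this ideal is prime because $V_S$ is irreducible (Proposition~\ref{prop: redundant decomposition}). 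Invoking Theorem~A and taking vanishing ideals then yields
$$\sqrt{L_G(d)} \;=\; \bigcap_S P_S \;=\; \bigcap_S \bigl((x_{i,j} : i \in S) + \sqrt{I_{G_{[n]\setminus S}}}\bigr),$$
with $S$ ranging over the $G$-admissible subsets of $[n]$.

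The main obstacle is to upgrade this identity to $L_G(d) = \bigcap_S P_S$, that is, to show that $L_G(d)$ is radical when $G$ is a forest and $d \geq 3$. I would attempt this by induction on $|V(G)|$, removing a leaf $\ell$ with neighbor $v$. The generator $f_{\{\ell,v\}} = \sum_k x_{\ell,k} x_{v,k}$ is the only one involving the fresh variables $x_{\ell,1}, \ldots, x_{\ell,d}$, which should allow one to analyze $L_G(d)$ via a short exact sequence relating it to $L_{G\setminus\ell}(d)$, radical by induction, and an appropriate colon ideal. The delicate point is that the family of $G$-admissible subsets of $[n]$ is not simply the union of $G\setminus\ell$-admissible subsets with or without $\ell$: new components can appear or disappear depending on whether the threshold $|N_G(i)\cap([n]\setminus S)| \geq d$ is affected by removing $\ell$, so the bookkeeping must be controlled through the matroid $\mathcal{M}(G)$ and its behaviour under vertex deletion. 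An alternative, possibly cleaner route would be to exhibit a squarefree initial ideal of $L_G(d)$ under a term order adapted to the tree structure, which would immediately imply radicality and bypass the admissibility bookkeeping.
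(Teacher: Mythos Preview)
Your overall architecture matches the paper's exactly: decompose $\OR$ via Theorem~A, identify each $\mathbb{I}(V_S)$, and then use radicality of $L_G(d)$ to pass from $\sqrt{L_G(d)}$ to $L_G(d)$. Two points of comparison are worth noting.

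First, you assert that the vanishing ideal of $V_\emptyset$ for the induced forest $G_{[n]\setminus S}$ equals $\sqrt{I_{G_{[n]\setminus S}}}$ ``using Definition~\ref{gm}''. This is not immediate from the definition; it is the content of Theorem~\ref{thm:IG-defines-UG} in the paper, whose proof is a genuine inductive approximation argument showing that every point of $\mathbb{V}(I_G)$ lies in the Euclidean closure of $U_G$. So this step needs more than a reference to the definition.

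Second, you flag radicality of $L_G(d)$ as the ``main obstacle'' and sketch two possible attacks (leaf induction, or a squarefree initial ideal). The paper does neither: it simply cites \cite[Theorem~1.5]{conca2019lovasz}, which already establishes that $L_G(d)$ is a radical complete intersection for forests when $d\ge 2$. So the work you propose here is unnecessary; the result is in the literature and the paper treats it as a black box. Your squarefree-initial-ideal idea is in fact close in spirit to how such results are typically proved, but reproving it is not needed for Theorem~C.
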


\medskip
\noindent
\textbf{Outline of the paper.}
Section~\ref{section 2} presents the necessary background on matroid theory used throughout the article.  
In Section~\ref{section 3} we introduce the varieties $V_S$ and the paving matroid $\mathcal{M}(G)$, and use them to describe the irreducible components of $\OR$ when $G$ is a forest.  
Section~\ref{section 4} is devoted to computing the dimensions of these components.  
In Section~\ref{section 5} we determine the defining ideals of the varieties $V_S$ and derive the primary decomposition of the LSS ideal $L_G(d)$.  
Finally, Section~\ref{section 6} applies the results of the preceding sections to several classical families of forest graphs, providing explicit descriptions of the irreducible components of $\OR$ and their dimensions.

\section{Preliminaries}\label{section 2}

We begin by recalling several fundamental notions from matroid theory that will be used throughout this work. For a comprehensive introduction, we refer the reader to~\cite{Oxley}. 

\begin{definition}
A \emph{matroid} $M$ consists of a ground set $[n]$ together with a collection $\mathcal{I}(M)$ of subsets of $[n]$, called \emph{independent sets}, satisfying:
\begin{enumerate}[label=(\roman*)]
\item $\emptyset \in \mathcal{I}$;
\item if $I \in \mathcal{I}$ and $I' \subseteq I$, then $I' \in \mathcal{I}$;
\item if $I_1, I_2 \in \mathcal{I}$ with $|I_1| < |I_2|$, then there exists $e \in I_2 \setminus I_1$ such that $I_1 \cup \{e\} \in \mathcal{I}$.
\end{enumerate}
A subset of $[n]$ that is not independent is called \emph{dependent}. For any $F \subseteq [n]$, its \emph{rank}, denoted $\rank(F)$, is the size of the largest independent subset of $F$. The \emph{rank of $M$} is $\rank([n])$.
\end{definition}

A particularly important class of matroids that we will use in this work is the family of paving matroids. It was conjectured in~\cite{mayhew2011asymptotic} that asymptotically almost all matroids belong to this class.

\begin{definition}\normalfont \label{pav}
A matroid $M$ of rank $d$ is called a \emph{paving matroid} if every $(d-1)$-subset of the ground set is independent. A \emph{dependent hyperplane} of $M$ is a maximal dependent set in which every subset of size $d$ is dependent.
\end{definition}

The following lemma from~\cite{hartmanis1959lattice} provides a hypergraph-theoretic characterization of paving matroids.

\begin{lemma}\label{sub h}
Let $d$ and $n$ be integers with $n \geq d + 1$. Suppose $\mathcal{L}$ is a collection of subsets of $[n]$, which satisfies $|l| \geq d$ for all $l\in \mathcal{L}$. Then $\mathcal{L}$ forms the collection of dependent hyperplanes of a paving matroid of rank $d$ on $[n]$ if and only if
\begin{equation}\label{condition for paving}
|l_1 \cap l_2| \leq d - 2 \qquad \text{for all distinct } l_1, l_2 \in \mathcal{L}.
\end{equation}
\end{lemma}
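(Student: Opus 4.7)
The plan is to prove both directions of the biconditional separately, with the easy direction being necessity and the bulk of the work lying in constructing a matroid from the hypergraph data.

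\textbf{Necessity.} Assume $\mathcal{L}$ is the family of dependent hyperplanes of a rank-$d$ paving matroid $M$. In any matroid of rank $d$, distinct hyperplanes $H_1 \neq H_2$ satisfy $\rank(H_1 \cap H_2) \leq d-2$; this is a standard lattice-theoretic property of hyperplanes. Since $M$ is paving, every $(d-1)$-subset of $[n]$ is independent, so any set of size at least $d-1$ has rank at least $d-1$. Therefore $|l_1 \cap l_2| \leq d-2$ is forced; otherwise $l_1 \cap l_2$ would contain an independent $(d-1)$-subset, giving $\rank(l_1 \cap l_2) \geq d-1$, a contradiction.

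\textbf{Sufficiency.} Given $\mathcal{L}$ satisfying \eqref{condition for paving}, I would define a matroid $M$ by declaring $\mathcal{I}(M)$ to consist of all subsets of $[n]$ of size at most $d-1$ together with all $d$-subsets not contained in any element of $\mathcal{L}$. The first two matroid axioms hold trivially. The augmentation axiom reduces, after the easy cases, to the following claim: if $I_1$ has size $d-1$ and $I_2$ is a basis (an independent $d$-subset), then some $e \in I_2 \setminus I_1$ satisfies $I_1 \cup \{e\} \in \mathcal{I}(M)$. I would argue by contradiction: if each $I_1 \cup \{e\}$ were contained in some $l_e \in \mathcal{L}$, then $l_e \cap l_{e'} \supseteq I_1$ has size at least $d-1$, forcing $l_e = l_{e'}$ by \eqref{condition for paving}. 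This common $l$ then contains $I_1 \cup I_2 \supseteq I_2$, contradicting the independence of $I_2$.

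\textbf{Identifying the dependent hyperplanes.} The subtlest point—and what I expect to be the main obstacle—is verifying that the family of dependent hyperplanes of $M$ is exactly $\mathcal{L}$, neither smaller nor larger. For each $l \in \mathcal{L}$, I would check: (i) $\rank(l) = d-1$, since any $(d-1)$-subset of $l$ is independent while every $d$-subset of $l$ lies in $l \in \mathcal{L}$ and is hence dependent; and (ii) $l$ is maximal with this property, by showing that for any $e \notin l$ and any $(d-1)$-subset $T \subseteq l$, the set $T \cup \{e\}$ is independent—otherwise $T \cup \{e\} \subseteq l'$ with $|l \cap l'| \geq d-1$, forcing $l=l'$ and $e \in l$. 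Conversely, if $H$ is a dependent hyperplane of $M$, then every $d$-subset of $H$ sits inside some element of $\mathcal{L}$; a two-$d$-subset comparison argument, analogous to the one in the augmentation step, shows that in fact a single $l \in \mathcal{L}$ contains all of $H$. Since both $H$ and $l$ have rank $d-1$ and $H$ is maximal with this rank, $H = l$.

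Apart from the bookkeeping in the hyperplane identification, the proof is essentially an exercise in exploiting \eqref{condition for paving} to propagate the single bound $|l_1 \cap l_2| \leq d-2$ through every would-be obstruction.
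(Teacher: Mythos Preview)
The paper does not supply its own proof of this lemma; it is quoted from the literature (Hartmanis, 1959) and used as a black box. There is therefore no in-paper argument to compare your proposal against.

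Your proof is essentially correct and follows the standard route. The necessity argument is clean, the verification of the exchange axiom is right (the one subcase you skip, $|I_2\setminus I_1|=1$, forces $I_1\subset I_2$ and is immediate), and the two-direction identification of the dependent hyperplanes is carried out carefully. One small omission: you never check that the constructed matroid actually has rank $d$, i.e.\ that some $d$-subset of $[n]$ fails to lie in any $l\in\mathcal{L}$. This is not entirely cosmetic, since the hypotheses as literally stated allow $\mathcal{L}=\{[n]\}$, for which the condition \eqref{condition for paving} is vacuous yet no rank-$d$ paving matroid can have $[n]$ as a hyperplane. This is a defect of the statement rather than of your argument, but it would be worth flagging and handling (or excluding) explicitly.
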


\begin{example}
Consider the collection
\[
\mathcal{L}
=\{\{1,2,4\},\{1,3,7\},\{1,5,6\},\{2,3,5\},\{2,6,7\},\{3,4,6\},\{4,5,7\}\}.
\]
For $d = 3$, one easily verifies that $\mathcal{L}$ satisfies condition~\eqref{condition for paving}. Hence $\mathcal{L}$ gives the family of dependent hyperplanes of a paving matroid of rank $3$ on $[7]$, known as the \emph{Fano plane}.
\end{example}


\section{\texorpdfstring{Irreducible decomposition of $\OR$ for forest graphs}{Irreducible decomposition of OR for forest graphs}}\label{section 3}

In this section we describe the irreducible components of $\OR$ in the case where $G$ is a forest and $d\geq 3$, assumptions that will remain fixed throughout this paper. Recall that we view each point $\mathbf{v}\in \OR$ as an $n$-tuple of vectors 
$(v_{1},\ldots,v_{n})\in (\mathbb{C}^{d})^{n}$ satisfying the orthogonality
relations $v_{i}\cdot v_{j}=0$ whenever $\{i,j\}\in E(G)$.

\subsection{Variety $U_{G}$}

We begin by introducing a subvariety $U_{G}$ that will play a main role in the discussion.

\begin{definition}
We define
\[
U_{G}:=\{\mathbf{v}=(v_{1},\ldots,v_{n})\in \OR : v_{i}\neq 0 \ \text{for every $i$}\}\subset \CC^{nd}.
\]
This is a quasi-affine variety, that is, the intersection of a closed set with an open set, since it is determined by the vanishing conditions defining $\OR$ together with the non-vanishing constraints $v_{i}\neq 0$.
\end{definition}

We next show that $U_{G}$ is nonempty and irreducible.

\begin{lemma}\label{lem: nonempty and irreducible}
$U_{G}$ is nonempty and irreducible.
\end{lemma}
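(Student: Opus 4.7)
The plan is to prove both nonemptiness and irreducibility simultaneously, by induction on the number of edges $|E(G)|$, removing a leaf at each step.

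The base case $|E(G)|=0$ is immediate: every orthogonality constraint is vacuous, so $U_G$ coincides with $(\mathbb{C}^d \setminus \{0\})^n$, a product of copies of a nonempty irreducible quasi-affine variety, hence nonempty and irreducible.

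For the inductive step I would pick a leaf $l$ of $G$ with unique neighbor $p$, and set $G' := G \setminus \{l\}$, still a forest with $|E(G')| = |E(G)| - 1$. Under the identification $(\mathbb{C}^d)^n \cong (\mathbb{C}^d)^{n-1} \times \mathbb{C}^d$ in which the last factor records the value assigned to $l$, the orthogonality conditions of $G$ split as those of $G'$ (affecting only the first factor) together with the single extra condition $v_l \cdot v_p = 0$. Accordingly, I would introduce
$$E := \bigl\{(\mathbf{v}', v_l) \in U_{G'} \times \mathbb{C}^d : v_l \cdot v_p = 0 \bigr\}.$$
The key observation is that the morphism of trivial bundles $U_{G'} \times \mathbb{C}^d \to U_{G'} \times \mathbb{C}$, $(\mathbf{v}', v_l) \mapsto v_l \cdot v_p$, is surjective on every fiber, because $v_p \neq 0$ throughout $U_{G'}$; thus its kernel $E$ is a rank-$(d-1)$ vector subbundle of $U_{G'} \times \mathbb{C}^d$. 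Since $U_{G'}$ is irreducible by the inductive hypothesis, and a vector bundle over an irreducible base is irreducible, $E$ is irreducible. Nonemptiness of $U_G$ then follows at once: take any $\mathbf{v}' \in U_{G'}$ and any $v_l$ in the hyperplane $v_p^{\perp} \setminus \{0\}$, which is nonempty since $\dim v_p^{\perp} = d - 1 \geq 2$.

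Under the above identification, $U_G$ sits inside $E$ precisely as the open subvariety cut out by $v_l \neq 0$, which is nonempty by the construction just given; hence $U_G$ is a nonempty open subset of the irreducible variety $E$ and is therefore irreducible. The only step requiring genuine care is the verification that $E$ is a bona fide vector bundle over $U_{G'}$, but this is routine since $v_p$ is nowhere vanishing on $U_{G'}$, so the single linear equation defining $E$ has constant maximal rank along the base; everything else is just bookkeeping about leaves in a forest.
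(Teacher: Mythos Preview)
Your proof is correct and follows essentially the same approach as the paper: both remove a vertex of degree at most one and use that the resulting projection is a fibration with irreducible fibers over an irreducible base. The only cosmetic difference is that you induct on $|E(G)|$ rather than on $n$, which lets you absorb the degree-zero case into the base case and phrase the fibration as a rank-$(d-1)$ vector subbundle before passing to the open locus $v_l\neq 0$; the paper instead treats $\deg(i)=0$ and $\deg(i)=1$ separately and speaks directly of a $(\mathbb{C}^{d-1})^{*}$-fibration.
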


\begin{proof}
We argue by induction on the number of vertices $n$. The base case $n=1$ is clear. Assume the statement holds for all forests with at most $n-1$ vertices. Let $G$ be a forest on the vertex set $[n]$.

Since $G$ is a forest, there exists a vertex $i \in [n]$ with degree at most one. Consider the induced subgraph $G_{[n]\setminus \{i\}}$, which is again a forest on fewer vertices. By the inductive hypothesis, $U_{G_{[n]\setminus \{i\}}}$ is nonempty and irreducible. We distinguish two cases:

\medskip
\noindent \textbf{Case 1:} $\deg(i) = 0$. In this case, the vertex $i$ imposes no orthogonality conditions. Thus, the vector $v_i$ can be any nonzero vector in $\CC^d$, and we have an isomorphism
\begin{equation}\label{isomorphism}
U_{G} \cong (\CC^d)^* \times U_{G_{[n]\setminus \{i\}}}.
\end{equation}
It follows that $U_G$ is nonempty and irreducible.

\medskip
\noindent \textbf{Case 2:} $\deg(i) = 1$. Let $j \in [n]$ be the unique vertex adjacent to $i$. To show nonemptiness, take any
\[
\mathbf{v} = (v_1, \ldots, v_{i-1}, v_{i+1}, \ldots, v_n) \in U_{G_{[n]\setminus \{i\}}}.
\]
Extending $\mathbf{v}$ by choosing $v_i$ as any nonzero vector orthogonal to $v_j$ yields a point in $U_G$, proving nonemptiness. For irreducibility, consider the projection map
\begin{equation}\label{fibration}
\pi : U_G \to U_{G_{[n]\setminus \{i\}}}, \quad \mathbf{v} = (v_1, \ldots, v_n) \mapsto (v_1, \ldots, v_{i-1}, \hat{v}_i, v_{i+1}, \ldots, v_n),
\end{equation}
where $\hat{v}_i$ denotes removal of $v_i$. The fiber over a point $\mathbf{v}$ consists of all nonzero vectors $v_i$ in the hyperplane orthogonal to $v_j$, which is isomorphic to $(\CC^{d-1})^{*}$. Hence, $\pi$ is a $(\CC^{d-1})^{*}$-fibration over $U_{G_{[n]\setminus \{i\}}}$. Since the base is irreducible, so is $U_G$.
\end{proof}

\subsection{Decomposition into varieties $V_{S}$}

We now define a variety associated to each subset $S\subseteq [n]$.

\begin{definition}\label{def: US and VS}
For $S\subseteq [n]$, set
\[
U(S):=\Bigl\{\mathbf{v}=(v_{1},\ldots,v_{n})\in \OR \,\Big|\, v_{i}=0 \ \text{if and only if}\ i\in S\Bigr\}\subset (\CC^{d})^{n}.
\]
We denote by $V_{S}:=\closure{U(S)}\subset \CC^{nd}$ the Zariski closure of $U(S)$ in $\CC^{nd}$.
\end{definition}

\begin{example}\label{example: running example}
Consider the forest graph $G$ from Figure~\ref{forest graph} with vertex set $[11]$ and let $d=4$. Consider the subset $S=\{9,10,11\}$. Then, 
\[U(S)=\{\vv=(v_{1},\ldots,v_{11})\in \textup{OR}_{4}(\closure{G}): \text{$v_{i}=0$ for $i\in \{9,10,11\}$ and $v_{i}\neq 0$ for $i\in [8]$}\}.\]
The variety $V_{S}$ is the closure of $U_{S}$ in $\CC^{44}$. 
\end{example}

\begin{figure}[H]
    \centering
    \includegraphics[width=0.5\textwidth]{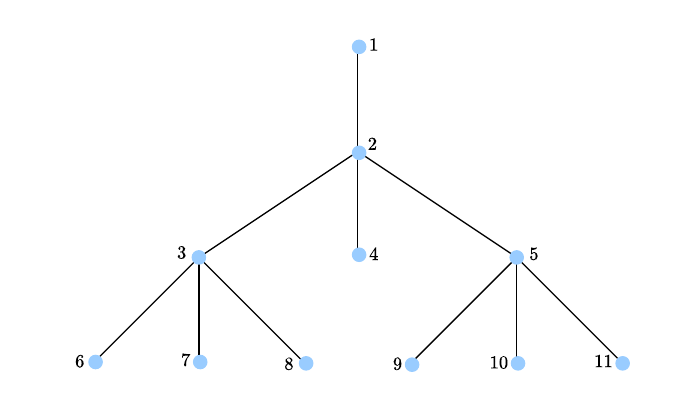}
    \caption{A forest graph.}
    \label{forest graph}
\end{figure}

The next proposition provides an irreducible but redundant decomposition of $\OR$.

\begin{proposition}\label{prop: redundant decomposition}
The variety $\OR$ admits the following irreducible decomposition:
\begin{equation}\label{eq:equality OR}\OR=\bigcup_{S\subseteq [n]}V_{S}.\end{equation}
\end{proposition}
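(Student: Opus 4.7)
The plan is to break the statement into two assertions: first, that the set-theoretic equality \eqref{eq:equality OR} holds, and second, that each $V_{S}$ appearing in the union is irreducible.

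For the equality, the inclusion $\bigcup_{S\subseteq [n]} V_{S} \subseteq \OR$ is immediate: by construction $U(S)\subseteq \OR$, and since $\OR$ is Zariski-closed in $\CC^{nd}$, its Zariski closure satisfies $V_{S}=\overline{U(S)}\subseteq \OR$. For the reverse inclusion, I would take an arbitrary point $\mathbf{v}=(v_{1},\ldots,v_{n})\in \OR$ and define
\[
S(\mathbf{v}) := \{\,i\in [n] : v_{i}=0\,\}.
\]
The claim is then $\mathbf{v}\in U(S(\mathbf{v}))\subseteq V_{S(\mathbf{v})}$. To verify membership, note that the orthogonality relations $v_{i}\cdot v_{j}=0$ for edges $\{i,j\}\in E(G)$ with $i\in S(\mathbf{v})$ are automatic, and the remaining relations are precisely the orthogonality relations imposed by the induced subgraph $G_{[n]\setminus S(\mathbf{v})}$ on the nonzero vectors $(v_{i})_{i\notin S(\mathbf{v})}$. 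Hence $\mathbf{v}$ satisfies the conditions defining $U(S(\mathbf{v}))$.

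For the irreducibility of each $V_{S}$, the idea is to identify $U(S)$ with $U_{G_{[n]\setminus S}}$ up to the inclusion of zero coordinates. Concretely, the projection
\[
U(S)\longrightarrow U_{G_{[n]\setminus S}},\qquad (v_{1},\ldots,v_{n})\longmapsto (v_{i})_{i\in [n]\setminus S},
\]
is a bijection (with inverse given by padding with zero entries at the indices in $S$), and is readily seen to be an isomorphism of quasi-affine varieties. Since $G_{[n]\setminus S}$ is an induced subgraph of the forest $G$, it is itself a forest, and Lemma~\ref{lem: nonempty and irreducible} then guarantees that $U_{G_{[n]\setminus S}}$ is nonempty and irreducible. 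Therefore $U(S)$ is nonempty and irreducible, and taking Zariski closure preserves irreducibility, yielding that $V_{S}$ is irreducible.

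The argument is essentially a bookkeeping exercise, so I do not anticipate a real obstacle. The only point that requires care is to match the definition of $U(S)$—which demands the strict condition ``$v_{i}=0$ \emph{if and only if} $i\in S$''—with the variety $U_{G_{[n]\setminus S}}$, which requires all vectors to be nonzero; this compatibility is what forces the decomposition to be redundant (many subsets $S$ can give nested closures), a redundancy that is addressed later through the notion of admissible subsets.
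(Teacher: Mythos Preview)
Your proposal is correct and follows essentially the same approach as the paper: both establish the set-theoretic equality by assigning to each $\mathbf{v}\in\OR$ the set $S=\{i:v_i=0\}$, and both deduce irreducibility of $V_S$ from the isomorphism $U(S)\cong U_{G_{[n]\setminus S}}$ together with Lemma~\ref{lem: nonempty and irreducible}. Your write-up is slightly more explicit (e.g., noting that $\OR$ is closed to justify $V_S\subseteq\OR$), but the argument is the same.
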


\begin{proof}
The inclusion $\supseteq$ is immediate, since by definition every $U(S)$ is contained in $\OR$. 
For the reverse inclusion, take any $\vv=(v_{1},\ldots,v_{n})\in \OR$ and define
\[
S := \{\, i\in [n] : v_{i}=0 \,\}.
\]
Since $\vv\in \OR$, 
it follows that $\vv\in U(S)$, which proves the equality in~\eqref{eq:equality OR}.

Since each $U(S)$ is isomorphic to $\Ugn$, Lemma~\ref{lem: nonempty and irreducible} shows that $U(S)$ is irreducible, and the same holds for its closure $V_{S}$. 
This establishes the irreducibility of the decomposition in~\eqref{eq:equality OR}.
\end{proof}

\subsection{Matroid $\mathcal{M}(G)$}

In this subsection, we introduce the paving matroid $\mathcal{M}(G)$ associated with any forest graph $G$.  
This matroid will play a central role in identifying the maximal varieties in~\eqref{eq:equality OR}.

\begin{definition}\label{def: MG}
Let $G$ be a forest on the vertex set $[n]$. We associate to $G$ a matroid $\mathcal{M}(G)$ on $[n]$ as follows.  
The matroid $\mathcal{M}(G)$ is the paving matroid of rank $d$ whose dependent hyperplanes are precisely the sets
\[
N_{G}(i) \qquad \text{for all } i\in [n] \text{ with } |N_{G}(i)|\geq d,
\]
where $N_{G}(i)$ denotes the neighborhood of $i$ in $G$. 

Since $G$ is a forest, it contains no $4$-cycle, and therefore any two such subsets intersect in at most one vertex, in particular in at most $d-2$ since $d\geq 3$. Hence, by~\eqref{condition for paving}, this paving matroid is well defined. 
\end{definition}

\begin{example}\label{example: description of MG}
Consider the forest graph $G$ from Example~\ref{example: running example}; see Figure~\ref{forest graph}. 
Let $d=4$. The only vertices of $G$ with degree at least four are $2$, $3$, and $5$. 
Hence the associated matroid $\mathcal{M}(G)$ is the paving matroid of rank four on $[11]$ with dependent hyperplanes
\[
\{N_{G}(2), N_{G}(3), N_{G}(5)\}
    = \bigl\{
        \{1,3,4,5\},
        \{2,6,7,8\},
        \{2,9,10,11\}
      \bigr\}.
\]
\end{example}

\begin{figure}[H]
    \centering
    \includegraphics[width=0.4\textwidth]{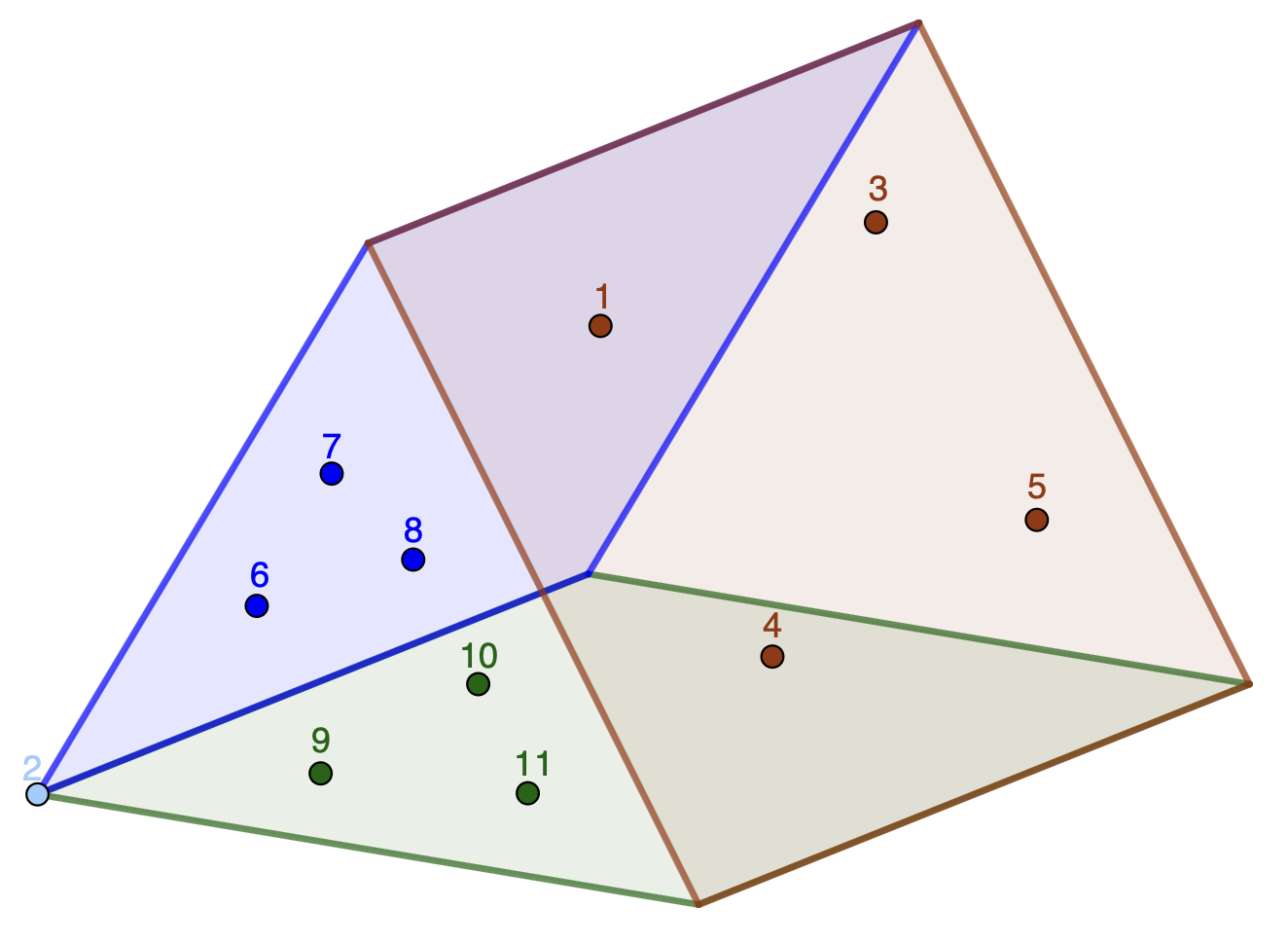}
    \caption{The matroid $\mathcal{M}(G)$ from Example~\ref{example: description of MG} associated with the forest in Figure~\ref{forest graph}.}
    \label{paving matroid MG}
\end{figure}

\begin{notation}
For a tuple of vectors $\vv=(v_{1},\ldots,v_{n})\in(\CC^{d})^{n}$, we write $\mathcal{M}(\vv)$ for the realizable matroid on $[n]$ whose dependent sets are precisely those subsets of $[n]$ corresponding to linear dependencies among the vectors of $\vv$. We say that $\vv$ realizes a matroid $N$ if $\mathcal{M}(\vv)=N$.
\end{notation}

\begin{example}
Consider the matroid $\mathcal{M}(G)$ from Figure~\ref{paving matroid MG} and Example~\ref{example: description of MG}. 
This matroid can be realized by a configuration of $11$ points in $\mathbb{P}^{3}$, as shown in Figure~\ref{paving matroid MG}. 
In this representation, four points lie in a common hyperplane if and only if the corresponding elements of the ground set belong to a common dependent hyperplane in $\mathcal{M}(G)$. From this realization in $\mathbb{P}^{3}$, one can also obtain a realization in $\mathbb{C}^{4}$.
\end{example}

We now observe that the matroid determined by any $\vv\in U_{G}$ contains all the dependencies of $\mathcal{M}(G)$.

\begin{lemma}\label{lem: more dependent}
If $\vv\in U_{G}$, then every dependent set of $\mathcal{M}(G)$ is also dependent in 
$\mathcal{M}(\vv)$.
\end{lemma}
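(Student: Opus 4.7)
The plan is to exploit the structure of the paving matroid $\mathcal{M}(G)$ to reduce the statement to a short case analysis on the size of a dependent set. Recall from Definition~\ref{pav} that in a paving matroid of rank $d$ every subset of size at most $d-1$ is independent. Consequently, any dependent set $A\subseteq[n]$ of $\mathcal{M}(G)$ satisfies $|A|\geq d$, and when $|A|=d$ the set $A$ must be contained in some dependent hyperplane (its closure is a flat of rank $d-1$ containing $d$ elements). By Definition~\ref{def: MG}, every such dependent hyperplane has the form $N_{G}(i)$ for some vertex $i\in[n]$ with $|N_{G}(i)|\geq d$.

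Given $\vv\in U_{G}$, let $A$ be a dependent set of $\mathcal{M}(G)$. First I would dispose of the case $|A|\geq d+1$: since $\{v_{j}: j\in A\}$ is a collection of more than $d$ vectors in $\CC^{d}$, it is automatically linearly dependent, so $A$ is dependent in $\mathcal{M}(\vv)$. The main case is $|A|=d$. Here $A\subseteq N_{G}(i)$ for some $i$, and the orthogonality relations defining $\OR$ yield $v_{i}\cdot v_{j}=0$ for every $j\in N_{G}(i)$, so in particular every $v_{j}$ with $j\in A$ lies in the hyperplane $v_{i}^{\perp}\subset \CC^{d}$. Since the assumption $\vv\in U_{G}$ forces $v_{i}\neq 0$, this hyperplane has dimension exactly $d-1$, and the $d$ vectors $\{v_{j}:j\in A\}$ lying in it must be linearly dependent. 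Thus $A$ is dependent in $\mathcal{M}(\vv)$ in this case as well.

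I do not expect any real obstacle: the argument is a direct unpacking of the definitions of paving matroid, $\mathcal{M}(G)$, and $\OR$, combined with the nonvanishing condition $v_{i}\neq 0$ built into $U_{G}$. The conceptual point, which will likely be reused later in the paper, is that each dependent hyperplane $N_{G}(i)$ of $\mathcal{M}(G)$ is geometrically realized as the $(d-1)$-dimensional hyperplane $v_{i}^{\perp}\subset\CC^{d}$ containing the corresponding vectors, so the combinatorial dependencies prescribed by $\mathcal{M}(G)$ automatically occur among the $v_{j}$'s.
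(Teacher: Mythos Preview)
Your proof is correct and follows essentially the same approach as the paper: both arguments use the orthogonality relation $v_{j}\in v_{i}^{\perp}$ for $j\in N_{G}(i)$ together with $v_{i}\neq 0$ to conclude that the vectors indexed by a dependent hyperplane span a space of dimension at most $d-1$. The only difference is presentational: the paper reduces directly to dependent hyperplanes, while you make the reduction explicit via a case split on $|A|$.
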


\begin{proof}
It suffices to verify that every dependent hyperplane of $\mathcal{M}(G)$ has rank at most $d-1$ in $\mathcal{M}(\vv)$. 
Let $N_{G}(i)$ be such a hyperplane with $\size{N_{G}(i)}\geq d$. 
Since $\vv\in \OR$, the vectors $\{v_{j}: j\in N_{G}(i)\}$ are all orthogonal to $v_{i}$; and because $v_{i}\neq 0$, they lie in the hyperplane orthogonal to $v_{i}$. 
Consequently, these vectors span a space of dimension at most $d-1$, proving the claim.
\end{proof}

We will also need the following lemma. We say that an $n$-tuple of vectors 
$\mathbf{v}=(v_{1},\ldots,v_{n})\in (\mathbb{C}^{d})^{n}$ is a \emph{faithful orthogonal representation} of $\overline{G}$ if 
$v_{i}\cdot v_{j}=0$ \emph{if and only if} $\{i,j\}\in E(G)$.

\begin{lemma}\label{lem: nice conditions}
Let $G$ be a forest on $[n]$, and let $\mathcal{M}(G)$ be the associated matroid. 
Then there exists a tuple of vectors $\vv=(v_{1},\ldots,v_{n})\in(\CC^{d})^{n}$ such that:
\begin{itemize}
\item $\vv$ is a faithful orthogonal representation of $\overline{G}$;
\item $\vv$ realizes $\mathcal{M}(G)$.
\end{itemize}
\end{lemma}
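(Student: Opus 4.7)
The plan is to argue by induction on $n$, using that any forest admits a vertex of degree at most one. The base case $n=1$ is immediate: any non-isotropic $v_{1}\in\CC^{d}$ works, since both conditions are vacuous.

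For the inductive step, pick $i\in[n]$ with $\deg_{G}(i)\le 1$ and set $G':=G_{[n]\setminus\{i\}}$. By the inductive hypothesis, there exists $\vv'=(v_{k})_{k\ne i}\in(\CC^{d})^{n-1}$ that is faithful for $\overline{G'}$ and realizes $\mathcal{M}(G')$. I would extend $\vv'$ by choosing $v_{i}$ generically in $\CC^{d}$ if $\deg_{G}(i)=0$, or generically in $v_{j}^{\perp}$ if $\deg_{G}(i)=1$ with unique neighbor $j$; the rest of the argument checks that both properties persist for such a generic $v_{i}$.

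Faithfulness demands $v_{i}\cdot v_{k}\ne 0$ for every $k$ non-adjacent to $i$, including $k=i$. Each such inequality is a codimension-one condition on $v_{i}$ in its parameter space, provided two non-degeneracy facts: the bilinear form restricted to $v_{j}^{\perp}$ is non-degenerate, which holds because $v_{j}$ is non-isotropic by induction; and no $v_{k}$ is parallel to $v_{j}$, which follows because $\mathcal{M}(G')$ is paving of rank $d\ge 3$ and hence contains no dependent pair. Since only finitely many such conditions arise, a generic $v_{i}$ satisfies them all simultaneously.

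For the matroid condition, Lemma~\ref{lem: more dependent} already gives $\mathcal{M}(\vv)\supseteq\mathcal{M}(G)$, so I need only rule out extra dependencies. A genericity argument shows $\mathcal{M}(\vv)$ is paving of rank $d$; the heart of the proof, and its main obstacle, is verifying that every linear hyperplane $H\subset\CC^{d}$ containing at least $d$ of the $v_{k}$'s is of the form $v_{k}^{\perp}$ for some $k$ with $|N_{G}(k)|\ge d$. I would split into cases according to whether $v_{i}\in H$ and whether $H=v_{j}^{\perp}$: the only genuinely new dependency arises when $\deg_{G}(i)=1$, $|N_{G}(j)|\ge d$, and $H=v_{j}^{\perp}$, yielding the dependent hyperplane $N_{G}(j)$—already recorded in $\mathcal{M}(G)$ by construction, since $N_{G}(j)=N_{G'}(j)\cup\{i\}$. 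All remaining candidates for an unexpected $H$ cut out codimension-one subvarieties of the parameter space of $v_{i}$, which a generic choice avoids. Combining these codimension-one conditions with the faithfulness conditions, a generic $v_{i}$ yields a tuple $\vv$ with both desired properties, completing the induction.
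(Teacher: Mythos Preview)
Your proposal is correct and follows essentially the same route as the paper: induction on $n$, removal of a vertex $i$ of degree $\le 1$, and extension of the inductive tuple by a generic $v_i$ in $\CC^d$ or in $v_j^\perp$. The paper phrases the matroid step as the condition $v_i\notin\operatorname{span}(v_k:k\in A)$ for every $(d{-}1)$-subset $A\not\subseteq N_G(j)$, and proves this is avoidable inside $v_j^\perp$ via the same faithfulness observation you isolate (if $H_A=v_j^\perp$ then $A\subseteq N_G(j)$); your hyperplane-counting formulation is equivalent once one notes that any such $H$ is the span of $d-1$ of the old vectors. Your remarks on non-isotropy of $v_j$ and non-parallelism of $v_k,v_j$ make explicit two small points the paper treats more implicitly.
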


\begin{proof}
We argue by induction on $n$. The claim is obvious when $n=1$. Assume it holds for all forests with at most $n-1$ vertices, and let $G$ be a forest on $[n]$.

Since $G$ is a forest, there exists a vertex $i\in [n]$ of degree at most one. By the inductive hypothesis, there is a tuple of vectors $(v_{1},\ldots,v_{i-1},v_{i+1},\ldots,v_{n})$ in $\CC^{d}$ which is a faithful orthogonal representation of $\overline{G_{[n]\setminus\{i\}}}$ and realizes $\mathcal{M}(G_{[n]\setminus\{i\}})$. We now extend this tuple by defining $v_{i}$, distinguishing two cases.

\smallskip
\noindent{\bf Case 1: $\deg(i)=0$.}  
Here $i$ imposes no orthogonality conditions. Choose $v_{i}$ as any vector in $\CC^{d}$ which is not orthogonal to any $v_{k}$ and does not lie in the span of any $d-1$ vectors of $\vv$. This yields a tuple $\vv'=(v_{1},\ldots,v_{i-1},v_{i},v_{i+1},\ldots,v_{n})$ satisfying both conditions.

\smallskip
\noindent{\bf Case 2: $\deg(i)=1$.}  
Let $j$ be the unique neighbor of $i$. We must choose $v_{i}$ so that the extended tuple satisfies:
\begin{itemize}
\item[(i)] $v_{i}\in v_{j}^{\perp}$,
\item[(ii)] $v_{i}\notin v_{k}^{\perp}$ for every $k\neq j$,
\item[(iii)] $v_{i}\notin \mathrm{span}(v_{k}:k\in A)$ for every subset $A\subset [n]\setminus\{i\}$ of size $d-1$ with $A\not\subseteq N_{G}(j)$.
\end{itemize}

This conditions would guarantee that the conditions of the statement hold.
Since the tuple $\vv$ realizes $\mathcal{M}(G_{[n]\setminus\{i\}})$, the vectors $v_{k}$ for $k\neq j$ are all distinct from $v_{j}$, and hence the hyperplanes $v_{k}^{\perp}$ for $k\neq j$ are all distinct from $v_{j}^{\perp}$. Hence the set
\[
v_{j}^{\perp}\setminus \bigcup_{k\in [n]\setminus\{i,j\}} v_{k}^{\perp}
\]
is nonempty and Zariski dense in $v_{j}^{\perp}$. Satisfying conditions (i) and (ii) is equivalent to choosing $v_{i}$ as any vector from this set.

For (iii), let $A\subset [n]\setminus\{i\}$ be  a subset of size $d-1$ with $A\not\subseteq N_{G}(j)$, and set $H_{A}:=\mathrm{span}(v_{k}:k\in A)$. We claim that $H_{A}\neq v_{j}^{\perp}$. Indeed, if $H_{A}=v_{j}^{\perp}$, then all $v_{k}$ with $k\in A$ would be orthogonal to $v_{j}$, which, by the faithfulness of the representation $\vv$, would imply $A\subset N_{G}(j)$, a contradiction. Thus $H_{A}\neq v_{j}^{\perp}$.

Consequently, there exist vectors in $v_{j}^{\perp}$ that lie outside every $v_{k}^{\perp}$ with $k\neq j$ and outside every $H_{A}$ as above. Choosing such a $v_{i}$ and extending the tuple $\vv'$ then satisfies the conditions of the lemma, completing the inductive step.
\end{proof}

\subsection{Identifying the maximal varieties $V_{S}$}

In this subsection, we identify the maximal varieties appearing in~\eqref{eq:equality OR}. To this end, we introduce the auxiliary subvarieties $F(S)$.


\begin{definition}
For a subset $S\subseteq [n]$, we define
\[
F(S)\subset U(S)
\]
to be the subvariety formed by all tuples $\vv=(v_{1},\ldots,v_{n})\in(\CC^{d})^{n}$ satisfying:
\begin{itemize}
\item $v_{i}=0$ for every $i\in S$;
\item the restriction $\restr{\vv}{[n]\setminus S}$ is a faithful orthogonal representation of $\overline{G_{[n]\setminus S}}$;
\item the restriction $\restr{\vv}{[n]\setminus S}$ realizes the matroid $\mathcal{M}(G_{[n]\setminus S})$.
\end{itemize}
\end{definition}

\begin{example}
Let $G$, $d$, and $S$ be as in Example~\ref{example: running example}. 
Then the set $F(S)$ consists of all tuples $\mathbf{v} = (v_{1}, \ldots, v_{11}) \in (\mathbb{C}^{4})^{11}$
satisfying the following conditions:
\begin{itemize}
    \item $v_{9} = v_{10} = v_{11} = 0$;
    \item $v_{2} \perp v_{i}$ for all $i \in \{1,3,4,5\}$, 
          $v_{3} \perp v_{i}$ for all $i \in \{2,6,7,8\}$, 
          and $v_{j} \not\perp v_{k}$ for all other pairs $j,k \in [8]$;
    \item the tuple $(v_{1}, \ldots, v_{8})$ realizes the matroid $\mathcal{M}(G_{[8]})$, 
          which is the rank-four paving matroid on $[8]$ with dependent hyperplanes 
          $\{1,3,4,5\}$ and $\{2,6,7,8\}$.
\end{itemize}
\end{example}

We will now see that each $F(S)$ is a dense subset of $V_{S}$.

\begin{lemma}\label{lem: is dense}
$F(S)$ is dense in $V_{S}$.
\end{lemma}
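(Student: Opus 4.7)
The plan is to establish that $F(S)$ is a nonempty Zariski open subset of the irreducible variety $U(S)$, which immediately yields $\overline{F(S)} = \overline{U(S)} = V_S$ and hence the claim.

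The first step is to recall that the forgetful map $(v_i)_{i\in[n]} \mapsto (v_i)_{i\in [n]\setminus S}$ identifies $U(S)$ with $U_{G_{[n]\setminus S}}$, so Lemma~\ref{lem: nonempty and irreducible} implies $U(S)$ is irreducible.

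The key step is to describe $F(S)$ as the intersection of $U(S)$ with an explicit Zariski open subset of $\CC^{nd}$. For $\vv \in U(S)$, I would argue that membership in $F(S)$ is equivalent to the restriction $\vv|_{[n]\setminus S}$ satisfying:
\begin{itemize}
\item[(a)] $v_i \cdot v_j \neq 0$ for every pair $\{i,j\}\subseteq [n]\setminus S$ with $\{i,j\}\notin E(G)$;
\item[(b)] $\{v_k : k\in A\}$ is linearly independent for every basis $A$ of $\mathcal{M}(G_{[n]\setminus S})$.
\end{itemize}
Applying Lemma~\ref{lem: more dependent} to the subforest $G_{[n]\setminus S}$, every dependent hyperplane of $\mathcal{M}(G_{[n]\setminus S})$ is automatically dependent in $\mathcal{M}(\vv|_{[n]\setminus S})$. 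Since $\mathcal{M}(G_{[n]\setminus S})$ is a paving matroid of rank $d$, its independent sets are precisely the bases together with their subsets; hence condition (b) conversely guarantees that every independent set of $\mathcal{M}(G_{[n]\setminus S})$ is realized by linearly independent vectors. Thus (a) and (b) together are equivalent to the defining conditions of $F(S)$, namely faithfulness and matroid realization. Each of (a) and (b) is the non-vanishing of a finite list of polynomials in the entries of $\vv$ (dot products and $d\times d$ minors respectively), so their intersection with $U(S)$ is open in $U(S)$.

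For nonemptiness, Lemma~\ref{lem: nice conditions} applied to the forest $G_{[n]\setminus S}$ produces a tuple on $[n]\setminus S$ satisfying (a) and (b); padding by zero on $S$ yields a point of $F(S)$. Since $F(S)$ is a nonempty open subset of the irreducible variety $U(S)$, it is dense in $U(S)$ and hence in $V_S = \overline{U(S)}$. The main subtlety, and the step I would check most carefully, is that realizing the paving matroid $\mathcal{M}(G_{[n]\setminus S})$ is an \emph{open} condition: this relies on Lemma~\ref{lem: more dependent} to dispose of the a priori closed conditions that encode the dependent hyperplanes, leaving only the genuinely open condition (b) requiring bases to remain linearly independent.
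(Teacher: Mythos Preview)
Your proof is correct and follows essentially the same approach as the paper: you show $F(S)$ is a nonempty open subset of the irreducible variety $U(S)$, invoking Lemma~\ref{lem: nonempty and irreducible} for irreducibility, Lemma~\ref{lem: more dependent} to reduce the matroid-realization condition to the open condition~(b), and Lemma~\ref{lem: nice conditions} for nonemptiness. The only difference is cosmetic---you spell out condition~(b) explicitly in terms of bases, whereas the paper phrases the same thing as ``no extra dependencies.''
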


\begin{proof}
We first prove that $F(S)$ is open in $V_{S}$. It is easy to see from the defining conditions of $U(S)$ that $U(S)$ is open in $V_{S}$. It is then enough with proving that $F(S)$ is open in $U(S)$. The faithfulness of $\restr{\vv}{[n]\setminus S}$ is clearly an open condition since is given by the non orthogonality of vectors $v_{j}$ and $v_{k}$ for $\{j,k\}\notin E(G)$ with $j,k\in [n]\setminus S$.

On the other hand, by Lemma~\ref{lem: more dependent}, we know that for each $\vv\in U(S)$, the matroid associated to the restriction $\restr{\vv}{[n]\setminus S}$ cotains among its dependencies the dependencies of $\mathcal{M}(G_{[n]\setminus S})$. Hence, saying that $\restr{\vv}{[n]\setminus S}$ realizes the matroid $\mathcal{M}(G_{[n]\setminus S})$ is equivalent to saying that there are not extra dependencies, which is clearly an open condition in $U(S)$. Thus, $F(S)$ is open in $U(S)$ and thus in $V_{S}$.

Moreover, by Lemma~\ref{lem: nonempty and irreducible}, it follows that $U(S)$ is irreducible and also its closure $V_{S}$, and by Lemma~\ref{lem: nice conditions}, it follows that $F(S)$ is nonempty. Thus, since $F(S)$ is open and nonempty in the irreducible variety $V_{S}$, it follows that is dense there.
\end{proof}

We now describe the subsets $S \subseteq [n]$ that give rise to the maximal varieties appearing in~\eqref{eq:equality OR}. These subsets form the following class.

\begin{definition}\label{def: admissible}
A subset $S \subseteq [n]$ is called \emph{$G$-admissible} if, for every $i \in S$,
\begin{equation}\label{admissible description}
\bigl|N_G(i)\cap([n]\setminus S)\bigr| \geq d
\quad \text{and} \quad
N_G(i)\cap([n]\setminus S)\not\subseteq
N_G(j)\cap([n]\setminus S)
\ \text{for every } j\in [n]\setminus S.
\end{equation}
Note that this condition is equivalent to
\begin{equation}\label{full rank}
\rank\bigl(N_{G}(i)\cap ([n]\setminus S)\bigr)=d
\quad \text{in $\mathcal{M}(G_{[n]\setminus S})$.}
\end{equation}
\end{definition}

\begin{example}\label{example: admissible subsets}
Consider the forest graph $G$ from Example~\ref{example: running example} (see Figure~\ref{forest graph}) and let $d = 4$. 
Since every vertex of $G$ has degree at most $4$, and no two vertices of degree $4$ share the same set of neighbors, condition~\eqref{admissible description} reduces to
\[
\bigl|N_G(i) \cap ([n] \setminus S)\bigr| = 4
\]
for all $i \in S$. 
Consequently, the $G$-admissible subsets are precisely
\[
\bigl\{\emptyset,\, \{2\},\, \{3\},\, \{5\},\, \{3,5\}\bigr\}.
\]
\end{example}

\begin{proposition}\label{prop: maximal components}
Let $S \subseteq [n]$. Then $V_{S}$ is maximal, with respect to inclusion, among the varieties in~\eqref{eq:equality OR} if and only if $S$ is $G$-admissible.
\end{proposition}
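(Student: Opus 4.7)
The plan is to use Lemma~\ref{lem: is dense}, together with the observation that the generic point of $V_S$ has zero pattern exactly $S$, in order to compare the varieties $V_S$ and $V_T$. First I would note that if $V_S\subseteq V_T$ with $S\neq T$, then necessarily $T\subsetneq S$: any $\vv\in V_T$ has $v_j=0$ for all $j\in T$ (because $V_T$ is contained in the closed locus cut out by these equations), while a generic point of $V_S$ has $v_j\neq 0$ for every $j\notin S$, which excludes $T\not\subseteq S$. So it suffices to analyze the case $T\subsetneq S$.

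For the forward direction I would assume that $S$ is $G$-admissible and suppose, for contradiction, that $V_S\subseteq V_T$ with $T\subsetneq S$. Picking $i\in S\setminus T$ and, via Lemmas~\ref{lem: nice conditions} and~\ref{lem: is dense}, a point $\vv\in F(S)$, the inclusion would furnish a sequence $\vv^{(\ell)}\in U(T)$ with $\vv^{(\ell)}\to\vv$, so $v_i^{(\ell)}\neq 0$ while $v_i^{(\ell)}\to 0$. A subsequential limit of the normalized directions $u^{(\ell)}:=v_i^{(\ell)}/\norm{v_i^{(\ell)}}$ would yield a unit vector $u$ with $u\cdot v_k=0$ for every $k\in N_G(i)$, since the orthogonality $v_i^{(\ell)}\cdot v_k^{(\ell)}=0$ survives normalization and the limit $\vv^{(\ell)}\to\vv$. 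Restricting attention to $k\in N_G(i)\cap([n]\setminus S)$, admissibility in the form of~\eqref{full rank}, combined with $\vv\in F(S)$ realizing $\mathcal{M}(G_{[n]\setminus S})$, forces these vectors to span $\CC^d$. Hence $u=0$, contradicting $\norm{u}=1$.

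For the reverse direction I would assume that $S$ is not $G$-admissible, pick $i\in S$ violating~\eqref{admissible description}, and set $T:=S\setminus\{i\}$. For each $\vv\in U(S)$ I would construct $\vv^{(\epsilon)}\in U(T)$ with $\vv^{(\epsilon)}\to\vv$ by leaving every $v_k$ with $k\neq i$ unchanged and setting $v_i^{(\epsilon)}:=\epsilon\,w$, where $w\neq 0$ satisfies $w\cdot v_k=0$ for every $k\in N_G(i)\cap([n]\setminus S)$ (for $k\in N_G(i)\cap S$ the orthogonality is automatic since $v_k=0$). Such a $w$ exists in both failure cases: if $|N_G(i)\cap([n]\setminus S)|<d$, the $v_k$ cannot span $\CC^d$ and any nonzero $w$ in their orthogonal complement works; if instead $N_G(i)\cap([n]\setminus S)\subseteq N_G(j)\cap([n]\setminus S)$ for some $j\in[n]\setminus S$, I would first observe that $j\notin N_G(i)$ (otherwise $j\in N_G(j)$, impossible in a simple graph) and then take $w=v_j$, which is nonzero since $j\notin S$ and orthogonal to every required $v_k$ because $\vv\in\OR$ and $k\in N_G(j)$. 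Thus $\vv\in V_T$, so $V_S\subseteq V_T$; the inclusion is strict because distinct subsets of $[n]$ give $V_S$'s with different generic zero patterns.

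The hard part will be the limit argument in the forward direction: extracting a unit direction from the collapsing sequence $v_i^{(\ell)}\to 0$ and exploiting the density of $F(S)$ in $V_S$ to force orthogonality against a full rank collection of vectors. This is precisely the step in which the matroid $\mathcal{M}(G)$, via condition~\eqref{full rank}, enters the argument in an essential way.
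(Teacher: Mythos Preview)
Your proof is correct but differs from the paper's in both directions. For the forward implication (admissible $\Rightarrow$ maximal), the paper stays purely algebraic: it observes that the condition ``$\{i_{1},\dots,i_{d}\}$ is linearly dependent'' is Zariski closed, so Lemma~\ref{lem: more dependent} extends from $U(S')$ to all of $V_{S'}$, and then derives a contradiction from the fact that $\{i_{1},\dots,i_{d}\}\subseteq N_{G}(i)$ is a dependent hyperplane of $\mathcal{M}(G_{[n]\setminus S'})$. You instead run an analytic argument: approximate $\vv\in F(S)\subseteq V_{T}$ by $\vv^{(\ell)}\in U(T)$ (using that Zariski and Euclidean closures of the constructible set $U(T)$ coincide over~$\CC$), normalize $v_{i}^{(\ell)}$ with the Hermitian norm, and extract a limit direction $u$ on the compact unit sphere that ends up bilinear--orthogonal to a spanning set. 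Both routes hinge on condition~\eqref{full rank}, but yours trades the matroid bookkeeping for compactness; the paper's version, on the other hand, would survive over any algebraically closed field. For the reverse implication, both arguments perturb the $i$-th coordinate by $\epsilon w$ and let $\epsilon\to 0$; the paper produces $w$ for $\vv\in F(S)$ via the matroid realization, whereas you produce it for every $\vv\in U(S)$ by splitting on which clause of~\eqref{admissible description} fails, taking in the containment case $w=v_{j}$ (your observation $j\notin N_{G}(i)$ is indeed needed to avoid requiring $v_{j}\cdot v_{j}=0$). Your version is slightly more elementary here since it bypasses Lemma~\ref{lem: is dense}.
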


\begin{proof}
We first prove that if $S$ is not $G$-admissible, then $V_{S}$ is redundant in~\eqref{eq:equality OR}. 
Since $S$ is not $G$-admissible and by Equation~\eqref{full rank}, there exists a vertex $i\in S$ such that
\begin{equation}\label{not full rank}
\rank\bigl(N_{G}(i)\cap ([n]\setminus S)\bigr)<d
\qquad \text{in $\mathcal{M}(G_{[n]\setminus S})$.}
\end{equation}
We claim that in this situation 
\[
F(S)\subset \overline{U(S\setminus \{i\})},
\]
which implies
\[
V_{S}=\overline{F(S)}\subset \overline{U(S\setminus \{i\})}=V_{S\setminus \{i\}},
\]
where the first equality follows from Lemma~\ref{lem: is dense}.

Take an arbitrary tuple of vectors $\vv=(v_{1},\ldots,v_{n})\in F(S)$. 
By definition, the restriction of $\vv$ to $[n]\setminus S$ realizes the matroid $\mathcal{M}(G_{[n]\setminus S})$. 
Combining this with~\eqref{not full rank}, we obtain
\[
\rank\{v_{k}:k\in N_{G}(i)\cap ([n]\setminus S)\}<d.
\]
Hence, there exists a nonzero vector $w\in \CC^{d}$ orthogonal to all these vectors, i.e.,
\begin{equation}\label{perp}
w\perp v_{k} \ \text{for every $k\in N_{G}(i)\cap ([n]\setminus S)$},
\qquad w\neq 0.
\end{equation}

For each $\lambda\in \CC^{\ast}$, define a new tuple 
$\vv_{\lambda}=(v_{\lambda}^{1},\ldots,v_{\lambda}^{n})$ by
\[
v_{\lambda}^{j}=
\begin{cases}
v_{j}, & j\neq i,\\
\lambda w, & j=i.
\end{cases}
\]
By the orthogonality in~\eqref{perp}, the fact that $\vv\in U(S)$, and since $\lambda w\neq 0$, it follows that $\vv_{\lambda}\in U(S\setminus \{i\})$ for all $\lambda \in \CC^{\ast}$. 
Letting $\lambda \to 0$, we have $\vv_{\lambda}\to \vv$, which shows that $\vv\in \overline{U(S\setminus \{i\})}$. 
As $\vv$ was arbitrary, this proves $F(S)\subset \overline{U(S\setminus \{i\})}$, as claimed. 

\medskip

We now prove the converse: if $S$ is $G$-admissible, then $V_{S}$ is irredundant in~\eqref{eq:equality OR}. 
Assume, for a contradiction, that there exists a subset $S'\neq S$ with $V_{S}\subseteq V_{S'}$. 
Take any $\vv=(v_{1},\ldots,v_{n})\in F(S)\subset U(S)$. 
Since $\vv\in U(S)\subset V_{S'}$, it follows from the definition that $v_{j}=0$ for all $j\in S'$. 
On the other hand, as $\vv\in F(S)$, we have $v_{j}=0$ if and only if $j\in S$, so necessarily $S'\subsetneq S$.

Choose $i\in S\setminus S'$. 
Since $S$ is $G$-admissible and the restriction of $\vv$ to $[n]\setminus S$ is a realization of $\mathcal{M}(G_{[n]\setminus S})$, it follows from~\eqref{full rank} that there exists a subset of $d$ vertices $\{i_{1},\ldots,i_{d}\}\subset N_{G}(i)\cap ([n]\setminus S)$ such that $\{v_{i_{1}},\ldots,v_{i_{d}}\}$ are linearly independent in $\CC^{d}$.

By Lemma~\ref{lem: more dependent}, we have that every dependent set of $\mathcal{M}(G_{[n]\setminus S'})$ is also dependent in
$\mathcal{M}(\mathbf{w})$
for every $\mathbf{w}\in U(S')$, hence also for every $\mathbf{w}\in V_{S'}$. 
In particular, since $\vv\in U(S)\subset V_{S'}$, it follows that every dependent set in $\mathcal{M}(G_{[n]\setminus S'})$ is also dependent in $\mathcal{M}(\mathbf{v})$.
Moreover, $\{i_{1},\ldots,i_{d}\}\subset N_{G}(i)\cap ([n]\setminus S')$, and since $i\in [n]\setminus S'$, this $d$-element set is dependent in $\mathcal{M}(G_{[n]\setminus S'})$. 
Hence, it follows that $\{v_{i_{1}},\ldots,v_{i_{d}}\}$ must be linearly dependent, contradicting their independence.

This contradiction shows that $V_{S}$ is irredundant, completing the proof.
\end{proof}

\subsection{Irreducible decomposition of $\OR$}

We are now in a position to describe the irreducible decomposition of $\OR$ when $G$ is a forest. 

\begin{theorem}\label{main theorem}
Let $G$ be a forest on $[n]$. The irreducible decomposition of $\OR$ is
\begin{equation}\label{irreducible irredundant}
\OR = \bigcup_{S} V_{S},
\end{equation}
where the union is taken over all $G$-admissible subsets of $[n]$.
\end{theorem}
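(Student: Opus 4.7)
The plan is to combine Proposition~\ref{prop: redundant decomposition}, which provides the irreducible but possibly redundant decomposition $\OR = \bigcup_{S \subseteq [n]} V_S$, with Proposition~\ref{prop: maximal components}, which identifies the varieties maximal under inclusion as precisely those $V_S$ with $S$ being $G$-admissible. With these two results in hand, the theorem reduces to a short finiteness argument followed by a check of irredundancy.

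First I would show that the union can be restricted to $G$-admissible subsets. Since $[n]$ has only finitely many subsets, the collection $\{V_T : T \subseteq [n]\}$ is a finite family of irreducible varieties. For any $S \subseteq [n]$, pick some $S^{\ast} \subseteq [n]$ with $V_S \subseteq V_{S^{\ast}}$ such that $V_{S^{\ast}}$ is maximal in this finite collection under inclusion; such an $S^{\ast}$ exists by finiteness. By Proposition~\ref{prop: maximal components}, $S^{\ast}$ is $G$-admissible. Combining this with Proposition~\ref{prop: redundant decomposition}, one obtains
\[
\OR = \bigcup_{S \subseteq [n]} V_S = \bigcup_{S \text{ admissible}} V_S,
\]
and each $V_S$ is irreducible by Lemma~\ref{lem: nonempty and irreducible}.

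Second, I would verify that the resulting decomposition is irredundant, meaning $V_S \not\subseteq V_{S'}$ for distinct $G$-admissible subsets $S, S'$. If such a containment held, then by the maximality of $V_S$ supplied by Proposition~\ref{prop: maximal components}, it could only be an equality $V_S = V_{S'}$. To rule this out, I would invoke Lemma~\ref{lem: is dense}: a generic point of $V_S$ lies in the dense open subset $F(S) \subseteq U(S)$, and such a point has its zero coordinates indexed exactly by $S$. Applying the same observation to $V_{S'}$ forces the generic zero-index set to be $S'$; since both sets describe the generic vanishing pattern of the same variety, $S = S'$, contradicting the assumption.

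The heavy lifting — the explicit degeneration $V_S \subseteq V_{S \setminus \{i\}}$ for non-admissible $S$ and the matroid-theoretic obstruction that rules out further inclusions among admissible sets — has already been carried out in Proposition~\ref{prop: maximal components}, so the remaining work is essentially bookkeeping. The only minor subtlety I would flag is ensuring that the use of Lemma~\ref{lem: is dense} is clean: the identification of $S$ with the generic zero-set of $V_S$ is what converts the equality of varieties into an equality of index sets.
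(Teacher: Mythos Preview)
Your proposal is correct and takes essentially the same approach as the paper, which simply states that the result follows directly from Propositions~\ref{prop: redundant decomposition} and~\ref{prop: maximal components}. You have merely spelled out the routine bookkeeping (the finiteness argument and the check that $V_S=V_{S'}$ forces $S=S'$ via the dense subsets $F(S)$) that the paper leaves implicit; in fact the proof of Proposition~\ref{prop: maximal components} already shows that an admissible $V_S$ cannot be contained in any $V_{S'}$ with $S'\neq S$, so irredundancy is immediate without your additional step.
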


\begin{proof}
The result follows directly from Propositions~\ref{prop: redundant decomposition} and~\ref{prop: maximal components}.
\end{proof}

\begin{example}
Consider the forest graph $G$ from Example~\ref{example: running example} (see Figure~\ref{forest graph}) and let $d = 4$. 
As observed in Example~\ref{example: admissible subsets}, the $G$-admissible subsets are exactly
\[
\bigl\{\emptyset,\, \{2\},\, \{3\},\, \{5\},\, \{3,5\}\bigr\}.
\]
It then follows from Theorem~\ref{main theorem} that $\4$ has precisely five irreducible components.
\end{example}

We conclude this section by recovering a result previously established in \textup{\cite[Theorem~1.5]{conca2019lovasz}}.

\begin{corollary}
Let $G$ be a forest, and let $\Delta(G)$ denote the maximum degree of a vertex of $G$. 
If $d \geq \Delta(G)+1$, then the variety $\OR$ is irreducible.
\end{corollary}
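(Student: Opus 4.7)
The plan is to derive the corollary directly from Theorem~\ref{main theorem}, which decomposes $\OR$ as a union of the irreducible varieties $V_{S}$ indexed by the $G$-admissible subsets $S\subseteq [n]$. The core observation is that when $d$ exceeds the maximum degree of $G$, the only $G$-admissible subset is the empty set, so the decomposition collapses to a single irreducible component.

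First I would recall Definition~\ref{def: admissible}: a subset $S\subseteq [n]$ is $G$-admissible if for every $i\in S$ one has $|N_{G}(i)\cap ([n]\setminus S)|\geq d$. Suppose $d\geq \Delta(G)+1$ and let $S$ be a $G$-admissible subset. Pick any $i\in S$. Then
\[
\bigl|N_{G}(i)\cap ([n]\setminus S)\bigr|\;\leq\;|N_{G}(i)|\;\leq\;\Delta(G)\;<\;d,
\]
which contradicts the first condition in~\eqref{admissible description}. Hence $S$ must be empty.

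It then follows from Theorem~\ref{main theorem} that $\OR = V_{\emptyset}$. Since $V_{\emptyset} = \overline{U(\emptyset)} = \overline{U_{G}}$, and $U_{G}$ is irreducible by Lemma~\ref{lem: nonempty and irreducible}, the variety $\OR$ is irreducible. There is no real obstacle here: the entire content of the corollary is contained in the bookkeeping of admissible sets, and the degree hypothesis rules out all nontrivial candidates for $S$.
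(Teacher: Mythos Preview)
Your proof is correct and follows essentially the same approach as the paper: both argue that under the hypothesis $d\geq \Delta(G)+1$ the only $G$-admissible subset is $\emptyset$, and then invoke Theorem~\ref{main theorem}. Your version spells out the degree inequality and the final identification $V_{\emptyset}=\overline{U_{G}}$ a bit more explicitly, but the argument is the same.
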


\begin{proof}
From the characterization of $G$-admissible subsets given in~\eqref{admissible description}, 
it follows that under the assumption $d \geq \Delta(G)+1$ the only $G$-admissible subset is $\emptyset$. 
Applying Theorem~\ref{main theorem} to this case yields the irreducibility of $\OR$.
\end{proof}

\section{\texorpdfstring{Dimension of the varieties $V_{S}$ and $\OR$}{Irreducible decomposition of OR for forest graphs}}\label{section 4}

In this section, we determine the dimension of $\OR$ and of its irreducible components.  
For this, we will make use of the following lemma.

\begin{lemma}\label{dim UG}
Let $G$ be a forest on $[n]$. Then
\[
\dim(U_{G}) = dn - |E(G)|.
\]
\end{lemma}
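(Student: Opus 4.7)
The plan is to prove this by induction on $n$, mirroring the structure of the proof of Lemma~\ref{lem: nonempty and irreducible}. That proof already builds $U_{G}$ from $U_{G_{[n]\setminus\{i\}}}$ for a leaf or isolated vertex $i$ of the forest, and in each case exhibits $U_{G}$ either as a product or as a fiber bundle whose fibers have known dimension. The main work is thus bookkeeping the dimensions through these two constructions.

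For the base case $n=1$ there are no edges, $U_{G}=(\CC^{d})^{\ast}$, and so $\dim(U_{G})=d = d\cdot 1 - |E(G)|$. For the inductive step, pick $i\in[n]$ of degree at most one, which exists since $G$ is a forest, and apply the inductive hypothesis to $G_{[n]\setminus\{i\}}$ (also a forest, on $n-1$ vertices). In Case~1 of the previous proof, $\deg(i)=0$, so $|E(G_{[n]\setminus\{i\}})|=|E(G)|$ and the isomorphism in~\eqref{isomorphism} gives
\[
\dim(U_{G}) = \dim((\CC^{d})^{\ast}) + \dim(U_{G_{[n]\setminus\{i\}}}) = d + d(n-1) - |E(G)| = dn - |E(G)|.
\]
In Case~2, $\deg(i)=1$, so $|E(G_{[n]\setminus\{i\}})|=|E(G)|-1$ and the projection $\pi$ in~\eqref{fibration} realizes $U_{G}$ as a $(\CC^{d-1})^{\ast}$-fibration over the irreducible base $U_{G_{[n]\setminus\{i\}}}$; since the fibers are irreducible of constant dimension $d-1$,
\[
\dim(U_{G}) = (d-1) + d(n-1) - (|E(G)|-1) = dn - |E(G)|.
\]

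The main (and essentially only) obstacle is justifying the dimension formula for the fibration in Case~2; I would invoke the standard fact that for a dominant morphism of irreducible varieties whose general fibers are irreducible of constant dimension $e$, the total space has dimension $e$ plus the dimension of the base. Since every fiber of $\pi$ is literally the punctured hyperplane $v_{j}^{\perp}\setminus\{0\}\cong (\CC^{d-1})^{\ast}$, this hypothesis holds uniformly, not merely generically, so the conclusion is immediate. Everything else is arithmetic, and both inductive cases land on the same closed form, completing the proof.
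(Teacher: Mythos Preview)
Your proof is correct and follows essentially the same approach as the paper: induction on $n$, removing a vertex of degree at most one, and invoking the isomorphism~\eqref{isomorphism} in the degree-$0$ case and the $(\CC^{d-1})^{\ast}$-fibration~\eqref{fibration} in the degree-$1$ case to add $d$ or $d-1$ to the dimension, respectively. The only difference is presentational: the paper records the inductive hypothesis once as a displayed formula and then appeals to it, whereas you carry out the arithmetic inline; your extra sentence justifying the fiber-dimension formula is a welcome clarification but not a substantive divergence.
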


\begin{proof}
We argue by induction on the number of vertices $n$. The claim is clear when $n=1$. 
Assume it holds for all forests on at most $n-1$ vertices, and let $G$ be a forest on $[n]$.

Since $G$ is a forest, there exists a vertex $i \in [n]$ of degree at most one. 
The induced subgraph $G_{[n]\setminus \{i\}}$ is again a forest, now on fewer vertices. 
By the inductive hypothesis,
\begin{equation}\label{induc hypothesis}
\dim\bigl(U_{G_{[n]\setminus \{i\}}}\bigr)
= d(n-1) - |E(G_{[n]\setminus \{i\}})|
= dn - |E(G)| - (d-\deg(i)).
\end{equation}

We distinguish two cases:

\medskip
\noindent \textbf{Case \(\deg(i)=0\).} 
In this case, using the isomorphism in Equation~\eqref{isomorphism}, we obtain
\[
\dim(U_{G}) = \dim(U_{G_{[n]\setminus \{i\}}}) + d,
\]
and the result follows from \eqref{induc hypothesis}.

\medskip
\noindent \textbf{Case \(\deg(i)=1\).} 
Here, the $(\CC^{d-1})^{\ast}$-fibration described in Equation~\eqref{fibration} implies that
\[
\dim(U_{G}) = \dim(U_{G_{[n]\setminus \{i\}}}) + (d-1),
\]
which, combined with \eqref{induc hypothesis}, gives the desired formula.
\end{proof}

We now present the dimensions of the irreducible components of $\OR$.

\begin{theorem}\label{thm: dim of components}
Let $S$ be a $G$-admissible subset of $[n]$. Then
\[
\dim(V_{S}) = d(n-\size{S}) - |E(G_{[n]\setminus S})|.
\]
\end{theorem}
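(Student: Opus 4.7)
The strategy is to reduce the computation of $\dim(V_{S})$ to the already established formula for $\dim(U_{G'})$ in Lemma~\ref{dim UG}, applied to the induced forest $G' = G_{[n]\setminus S}$.

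First, I would observe that there is a natural morphism
\[
\varphi\colon U(S)\longrightarrow U_{G_{[n]\setminus S}},\qquad
(v_{1},\ldots,v_{n})\longmapsto (v_{i})_{i\in [n]\setminus S},
\]
obtained by discarding the coordinates indexed by $S$. This map is well defined because the orthogonality relations of $\OR$ that involve an index $i\in S$ are automatically satisfied once $v_{i}=0$, so only the orthogonality conditions coming from edges of $G_{[n]\setminus S}$ remain active; moreover the condition ``$v_{i}\neq 0$ for $i\notin S$'' is exactly the defining condition of $U_{G_{[n]\setminus S}}$. Conversely, the inverse is given by inserting $0$ in each position indexed by $S$, so $\varphi$ is an isomorphism of quasi-affine varieties. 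In particular $\dim U(S) = \dim U_{G_{[n]\setminus S}}$.

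Next, I would apply Lemma~\ref{dim UG} to the induced subgraph $G_{[n]\setminus S}$, which is itself a forest as an induced subgraph of a forest, obtaining
\[
\dim\bigl(U_{G_{[n]\setminus S}}\bigr) \;=\; d(n-\size{S})\;-\;\size{E(G_{[n]\setminus S})}.
\]
Finally, since $V_{S}=\closure{U(S)}$ and taking Zariski closures preserves dimension, we conclude
\[
\dim(V_{S}) \;=\; \dim(U(S)) \;=\; d(n-\size{S})\;-\;\size{E(G_{[n]\setminus S})}.
\]

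There is no real obstacle here: the $G$-admissibility of $S$ is used only to guarantee (via Proposition~\ref{prop: maximal components} and Lemma~\ref{lem: nonempty and irreducible}) that $V_{S}$ is a genuine irreducible component rather than an empty or redundant piece, while the dimension formula itself follows from the two routine ingredients above — the isomorphism $U(S)\cong U_{G_{[n]\setminus S}}$ and the inductive dimension count of Lemma~\ref{dim UG}.
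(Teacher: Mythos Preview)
Your proposal is correct and follows essentially the same argument as the paper: establish the isomorphism $U(S)\cong U_{G_{[n]\setminus S}}$, apply Lemma~\ref{dim UG} to the induced forest, and use that Zariski closure preserves dimension. Your additional observation that $G$-admissibility is not actually needed for the dimension formula itself (only for $V_S$ to be an irreducible component) is also accurate.
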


\begin{proof}
By definition, we have have that $U(S)\cong U_{G_{[n]\setminus S}}$. Applying Lemma~\ref{dim UG}, we obtain 
\[\dim(V_{S})=\dim(\closure{U(S)})=\dim(U(S))=\dim(U_{G_{[n]\setminus S}})=d(n-\size{S}) - |E(G_{[n]\setminus S})|.\]
\end{proof}

\begin{corollary}
The dimension of $\OR$ is given by
\[
\dim(\OR)
=\max\!\left\{\, d\bigl(n-\lvert S\bigr\rvert\bigr)\;-\;\lvert E(G_{[n]\setminus S})\rvert 
\;:\; S \text{ is $G$–admissible} \right\}.
\]
\end{corollary}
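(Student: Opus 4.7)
The plan is essentially to combine Theorem~\ref{main theorem} with Theorem~\ref{thm: dim of components}. Recall that the dimension of an algebraic variety that decomposes as a finite union of irreducible closed subvarieties is simply the maximum of the dimensions of those subvarieties. So the argument has two ingredients already established in the paper, and the corollary is just their concatenation.

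First, I would invoke Theorem~\ref{main theorem}, which asserts that $\OR = \bigcup_{S} V_{S}$, where the union ranges over the $G$-admissible subsets $S \subseteq [n]$ and each $V_{S}$ is irreducible. Since $[n]$ has only finitely many subsets, this is a finite union of irreducible closed subvarieties, and in particular
\[
\dim(\OR) \;=\; \max\bigl\{\,\dim(V_{S}) \;:\; S \text{ is $G$-admissible}\,\bigr\}.
\]

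Second, I would substitute the formula from Theorem~\ref{thm: dim of components}, namely
\[
\dim(V_{S}) \;=\; d(n-\lvert S\rvert) \;-\; \lvert E(G_{[n]\setminus S})\rvert,
\]
valid for every $G$-admissible subset $S$. Replacing each $\dim(V_{S})$ by this expression in the maximum yields exactly the claimed identity. No case analysis or induction is needed, and there is no real obstacle: the corollary is a direct combination of the preceding two theorems.
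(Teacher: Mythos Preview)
Your proposal is correct and is exactly the intended argument: the paper states this corollary immediately after Theorem~\ref{thm: dim of components} without proof, treating it as an immediate consequence of combining the irreducible decomposition in Theorem~\ref{main theorem} with the dimension formula in Theorem~\ref{thm: dim of components}, precisely as you outline.
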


\begin{example}
Consider the forest graph $G$ from Example~\ref{example: running example} (see Figure~\ref{forest graph}) and let $d = 4$. 
The dimensions of the irreducible components of $\4$ are
\[
\dim(V_{\emptyset}) = \dim(V_{\{2\}}) = \dim(V_{\{3\}}) = \dim(V_{\{5\}}) = \dim(V_{\{3,5\}}) = 34.
\]
It follows that $\dim(\4) = 34$.
\end{example}

\section{\texorpdfstring{Primary decomposition of $L_{G}(d)$}{k}}\label{section 5}

In this section, we determine the primary decomposition of $L_{G}(d)$ for $G$ a forest graph. 
We begin by computing the ideals corresponding to the irreducible components $V_{S}$, which then leads to the primary decomposition.

\subsection{Ideals of the varieties $V_{S}$}

In this subsection, we focus on determining the ideals $\mathbb{I}(V_{S})$. Recall that these ideals are contained in the coordinate ring $\CC[X]$, where $X = (x_{i,j})$ is an $n \times d$ matrix of indeterminates, as in Definition~\ref{def: LSS ideals}.
Since $U(S)\cong U_{G_{[n]\setminus S}}$ and $\mathbb{I}(V_{S})=\mathbb{I}(U_{S})$, this reduces to understanding the ideal $\mathbb{I}(U_{G})$. We denote by $F_G$ the subvariety of $U_G$ consisting of all tuples $\vv = (v_1, \ldots, v_n) \in (\CC^d)^n$ satisfying:
\begin{itemize}
    \item $\vv$ is a faithful orthogonal representation of $\overline{G}$;
    \item $\vv$ realizes the matroid $\mathcal{M}(G)$.
\end{itemize}

By Lemma~\ref{lem: is dense}, the subvariety $F_G$ is dense in $U_G$, that is, $\overline{F_G} = \overline{U_G}$. In particular, the corresponding ideals coincide:
\[
\mathbb{I}(U_G) = \mathbb{I}(F_G).
\]

\begin{remark}\label{tñ}
Let $G$ be a forest on $[n]$, and fix a vertex $i \in [n]$. Choose distinct $d-1$ neighbors $i_1, \ldots, i_{d-1} \in N_G(i)$. For any $\vv = (v_1, \ldots, v_n) \in F_G$, we have
\[
v_i \perp \{v_{i_1}, \ldots, v_{i_{d-1}}\}.
\]
Since the vectors $v_{i_1}, \ldots, v_{i_{d-1}}$ are linearly independent and $v_i \neq 0$, it follows that $v_i$ is, up to a nonzero scalar, the normal vector to the hyperplane spanned by $v_{i_1}, \ldots, v_{i_{d-1}}$. Equivalently, if
\[
A = (v_{i_1} \mid \cdots \mid v_{i_{d-1}}),
\]
then, up to a nonzero scalar, the coordinates of $v_i$ are given by the signed maximal minors of $A$:
\[
(v_i)_j = (-1)^j \det(A_{[d]\setminus\{j\}}), \qquad j = 1, \ldots, d.
\]
This relation translates into an algebraic condition in $\mathbb{I}(F_G)$. Specifically, if $P \in \mathbb{I}(F_G) \subset \CC[X]$, then the polynomial obtained from $P$ by replacing the variables $x_{i,1}, \ldots, x_{i,d}$ with the signed maximal minors of the matrix
\[
X_{\{i_1, \ldots, i_{d-1}\}}
\]
also belongs to $\mathbb{I}(F_G)$. Here $X_{\{i_1, \ldots, i_{d-1}\}}$ denotes the submatrix of $X$ with rows indexed by $i_{1},\ldots,i_{d-1}$.
\end{remark}

\begin{example}
Consider the forest graph $G$ from Figure~\ref{forest graph} and let $d = 4$. 
Since $\{6,7,8\}$ are neighbors of $3$, we obtain the following. 
If $P \in \mathbb{I}(F_G) \subset \mathbb{C}[X]$, then the polynomial obtained from $P$ by replacing the variables $x_{3,1}, x_{3,2}, x_{3,3}, x_{3,4}$ with the signed maximal minors
{\scriptsize
\begin{equation}\label{signed maximal minors}
-\det
\begin{pmatrix} x_{6,2} & x_{6,3} & x_{6,4} \\ x_{7,2} & x_{7,3} & x_{7,4} \\ x_{8,2} & x_{8,3} & x_{8,4} \end{pmatrix}
\quad
\det
\begin{pmatrix} x_{6,1} & x_{6,3} & x_{6,4} \\ x_{7,1} & x_{7,3} & x_{7,4} \\ x_{8,1} & x_{8,3} & x_{8,4} \end{pmatrix}
\quad
-\det
\begin{pmatrix} x_{6,1} & x_{6,2} & x_{6,4} \\ x_{7,1} & x_{7,2} & x_{7,4} \\ x_{8,1} & x_{8,2} & x_{8,4} \end{pmatrix}
\quad
\det
\begin{pmatrix} x_{6,1} & x_{6,2} & x_{6,3} \\ x_{7,1} & x_{7,2} & x_{7,3} \\ x_{8,1} & x_{8,2} & x_{8,3} \end{pmatrix}
\end{equation}
}
also belongs to $\mathbb{I}(F_G)$.
\end{example}

Using the previous remark, we now construct an ideal $I_{G}$ associated with any forest~$G$.

\begin{definition}\label{gm}
Let $G$ be a forest. We define the ideal $I_G$, constructed as follows:
\begin{itemize}
    \item Define $X_0$ to be the set of polynomials generating the ideal $L_G(d)$, namely the polynomials $f_e$ from~\eqref{polynomials fe} for $e\in E(G)$. 
    For $j \ge 1$, define recursively the set $X_j$ to consist of all polynomials obtained from those in $X_{j-1}$ by modifying some of their variables, possibly leaving them unchanged, according to the substitution procedure described in Remark~\ref{tñ}. 
    Thus, we obtain an ascending chain
    \[
    X_0 \subseteq X_1 \subseteq X_2 \subseteq \cdots.
    \]
    \item For each $j \ge 0$, let $I_j$ denote the ideal generated by the polynomials in $X_j$. Then
    \[
    I_0 \subseteq I_1 \subseteq I_2 \subseteq \cdots.
    \]
    Since the polynomial ring $\CC[X]$ is Noetherian, this chain stabilizes. We denote the stable ideal by $I_G$.
\end{itemize}
\end{definition}

\begin{example}
Let $G$ be the forest graph from Figure~\ref{forest graph} and let $d = 4$. 
We have $f_{\{1,2\}} \in I_{0}$. 
Since $\{3,4,5\}$ are neighbors of $2$, it follows that
\[
\det(X_{\{1,3,4,5\}}) \in I_{1}.
\] 
Furthermore, as $\{6,7,8\}$ are neighbors of $3$, the polynomial obtained from $\det(X_{\{1,3,4,5\}})$ by replacing the variables $x_{3,1}, x_{3,2}, x_{3,3}, x_{3,4}$ with the signed maximal minors from~\eqref{signed maximal minors} belongs to $I_{2}$. 
Finally, all of these polynomials lie in $I_{G}$.
\end{example}

\begin{remark}\label{rem ñ}
If $P \in I_{G}$, then any polynomial obtained from $P$ by modifying some of its variables according to the procedure described in Remark~\ref{tñ} also belongs to $I_{G}$.
\end{remark}

It follows from Remark~\ref{tñ} and an induction argument that 
\begin{equation}\label{inclusion}
I_G \subseteq \mathbb{I}(F_G).
\end{equation}
Equivalently, $I_G$ is generated by all polynomials obtained through the iterative process of replacing the $d$ variables corresponding to a given row of $X$ with the signed Plücker minors of the submatrix of $X$ determined by any $d-1$ of its neighboring vertices.

We now show that the ideal $I_G$ provides, up to radical, a complete set of defining equations for the Zariski closure of $U_G$.

\begin{theorem}
\label{thm:IG-defines-UG}
For any forest $G$, we have
\begin{equation}\label{equations for UG}
\overline{U_G} \;=\; \mathbb{V}(I_G).
\end{equation}
In particular, $\mathbb{I}(U_G) = \sqrt{I_G}$.
\end{theorem}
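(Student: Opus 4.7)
The inclusion $\overline{U_G}\subseteq\mathbb{V}(I_G)$ is recorded in~\eqref{inclusion}, so the content of the theorem lies in the reverse inclusion $\mathbb{V}(I_G)\subseteq\overline{U_G}$; once this is established, the identity $\mathbb{I}(U_G)=\sqrt{I_G}$ follows from the Nullstellensatz. My plan is to prove the reverse inclusion pointwise: given any $\vv=(v_1,\ldots,v_n)\in\mathbb{V}(I_G)$, I construct a one-parameter family $\vv(t)\in U_G$ with $\vv(t)\to\vv$ as $t\to 0$. Set $S:=\{i\in[n]:v_i=0\}$; since $L_G(d)\subseteq I_G$, the tuple $\vv$ already satisfies the orthogonality relations of $\OR$. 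The deformation will keep $v_i(t)=v_i$ for $i\notin S$ and put $v_i(t)=t\,w_i$ for $i\in S$, where the auxiliary vectors $w_i\in\CC^d$ must satisfy
\begin{itemize}
\item[\textup{(a)}] $w_i\neq 0$ and $w_i\in W_i:=\bigcap_{j\in N_G(i)\setminus S}v_j^{\perp}$ for every $i\in S$;
\item[\textup{(b)}] $w_i\cdot w_k=0$ for every edge $\{i,k\}\in E(G_S)$, where $G_S$ is the induced subgraph of $G$ on $S$.
\end{itemize}

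Condition (a) is the easier half. Applying the substitution procedure of Remark~\ref{tñ} to $f_{\{i,\ell\}}$ for any $\ell\in N_G(i)$ yields $\det(X_A)\in I_G$ for every $d$-subset $A\subseteq N_G(i)$ with $|N_G(i)|\geq d$. Since $\vv$ annihilates these determinants, one obtains $\rank\{v_j:j\in N_G(i)\setminus S\}\leq d-1$; when $|N_G(i)|<d$ the bound is automatic. Hence a nonzero $w_i\in W_i$ can be found for every $i\in S$.

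The main difficulty lies in condition (b), which demands globally compatible choices of the $w_i$'s along every edge of $G_S$. Here the forest structure of $G_S$, inherited from $G$, is essential. The strategy is to process each tree of $G_S$ from the leaves inward, fixing $w_i$ one vertex at a time. When $\dim W_i\geq 2$, intersecting with the hyperplane $w_k^{\perp}$ determined by a previously chosen neighbor $k$ still leaves a nontrivial subspace, so the choice is automatic. The truly delicate case occurs when two adjacent vertices $i,k\in S$ both have $\dim W_i=\dim W_k=1$, say $W_i=\langle u_i\rangle$ and $W_k=\langle u_k\rangle$; then the required equality $w_i\cdot w_k=0$ reduces to $u_i\cdot u_k=0$. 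This is exactly the condition that the doubly substituted polynomials in $I_G$ enforce: starting from $f_{\{i,k\}}$ and substituting $x_i$ with the signed minors of $d-1$ linearly independent vectors $v_{i_1},\ldots,v_{i_{d-1}}\in N_G(i)\setminus S$ and $x_k$ with the signed minors of $d-1$ linearly independent vectors $v_{k_1},\ldots,v_{k_{d-1}}\in N_G(k)\setminus S$, the Binet--Cauchy identity for cross products rewrites the resulting polynomial as the Gram determinant $\det(\langle v_{i_p},v_{k_q}\rangle)_{p,q=1}^{d-1}$, which equals $u_i\cdot u_k$ up to a nonzero scalar. Its vanishing at $\vv$ produces the required orthogonality $u_i\perp u_k$.

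The main obstacle, and the part that will require the most care, is assembling these local compatibilities into a globally consistent choice of the $w_i$'s along all of $G_S$. Because $G_S$ is acyclic, each vertex has at most one already-processed neighbor when its turn comes, and the previous paragraph supplies the algebraic input that makes the step feasible; higher-dimensional $W_i$'s are handled by the automatic-choice remark, and a small subtlety appears when propagating through a chain of one-dimensional $W_i$'s, where iterating the substitution argument along the path in $G_S$ shows that consecutive generators remain orthogonal. A careful induction on the number of edges of $G_S$ then produces the desired $w_i$'s and hence the deformation $\vv(t)\in U_G$ with $\vv(t)\to\vv$, placing $\vv$ in $\overline{U_G}$. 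This completes the equality $\overline{U_G}=\mathbb{V}(I_G)$, from which $\mathbb{I}(U_G)=\sqrt{I_G}$ is immediate.
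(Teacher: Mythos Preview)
Your overall plan—approximating $\vv\in\mathbb{V}(I_G)$ by $\vv(t)=(v_i)_{i\notin S}\cup(tw_i)_{i\in S}$—is natural, and conditions (a) and (b) are correctly identified. Condition~(a) is fine. The gap is in (b). Your double-substitution/Binet--Cauchy argument only treats the case where \emph{both} endpoints of an edge of $G_S$ have $\dim W_i=\dim W_k=1$. The mixed case is not covered. Concretely, processing root-outward (so that each vertex has at most one already-processed neighbour, as you later assert), suppose you reach $k$ with $\dim W_k\ge 2$ and parent $p$: you choose $w_k\in W_k\cap w_p^{\perp}$. Now a child $i$ of $k$ has $\dim W_i=1$, so $w_i=u_i$ is forced up to scalar; you then need $u_i\perp w_k$, but nothing in your choice of $w_k$ guaranteed this. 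If $k$ has several such children $i$, one must show $W_k\cap w_p^{\perp}\cap\bigcap_i u_i^{\perp}\neq 0$, and this genuinely requires \emph{iterated} substitutions in $I_G$ along longer paths (not just the single double substitution you describe). For instance, with $d=3$ and $G_S$ the path $c_1\!-\!c_2\!-\!c_3$ with $\dim W_{c_1}=\dim W_{c_3}=1$, $W_{c_2}=v_p^{\perp}$, the needed relation is $\det(v_p,u_{c_1},u_{c_3})=0$, which comes from a \emph{triple} substitution through $c_1,c_2,c_3$; your sketch does not produce this. The phrase ``from the leaves inward'' is also inconsistent with ``at most one already-processed neighbour''.

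The paper's proof sidesteps this global-compatibility problem by a different induction, on $m=|\{i:v_i=0\}|$. If some zero vertex $i$ has $\rank\{v_j:j\in N_G(i)\}=d-1$, one replaces $v_i$ by a small multiple of the normal vector; the closure of $I_G$ under the substitution of Remark~\ref{tñ} guarantees the perturbed point \emph{remains in} $\mathbb{V}(I_G)$, so one recurses with $m-1$ zeros. If instead every zero vertex has neighbours of rank $\le d-2$, then there is enough slack to fill in all zeros directly (each step adds at most one new orthogonality constraint, keeping the relevant span $\le d-1$). Thus the paper never needs to solve the simultaneous compatibility system you set up; the ``stay in $\mathbb{V}(I_G)$ after each fill-in'' trick is the idea your argument is missing.
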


\begin{proof}
By~\eqref{inclusion}, we have $F_G \subseteq \mathbb{V}(I_G)$. Since $F_G$ is dense in $U_G$, it follows that
\[
\overline{U_G}=\overline{F_G} \subseteq \mathbb{V}(I_G),
\]
which proves the inclusion ``$\subset$'' in~\eqref{equations for UG}.

For the reverse inclusion, let $\vv=(v_1,\ldots,v_n)\in\mathbb{V}(I_G)$; we must show that $\vv \in \overline{U_G}$. We prove that $\vv$ lies in the Euclidean closure of $U_G$: for every $\epsilon>0$ we construct $\tilde{\vv}\in U_G$ with $\|\vv-\tilde{\vv}\|<\epsilon$. Let
\[
\vv(0)=\{\,i\in[n]: v_i=0\,\}.
\]
We argue by induction on $m = |\vv(0)|$. If $m=0$, then $\vv\in U_G$ by definition. For the inductive step, assume the result holds for all tuples in $\mathbb{V}(I_G)$ with fewer than $m$ zero vectors and we prove it for $\vv\in \mathbb{V}(I_G)$ with $\size{\vv(0)}=m$.

\smallskip 

{\bf Step~1.} 
Suppose there exists $i\in\vv(0)$ such that
\[
\rank\{v_j : j\in N_G(i)\} = d-1.
\]
Choose $i_1,\ldots,i_{d-1}\in N_G(i)$ such that $v_{i_{1}},\ldots,v_{i_{d-1}}$ are linearly independent set.  
Define $\vv'=(v'_1,\ldots,v'_n)$ by $v'_j=v_j$ for $j\neq i$, and let $v'_i$ be a nonzero scalar multiple of the normal vector to the hyperplane spanned by $v_{i_1},\ldots,v_{i_{d-1}}$, chosen so that
\[
\|\vv'-\vv\|<\epsilon/2.
\]

We claim $\vv'\in\mathbb{V}(I_G)$. Indeed, for any $P\in I_G$,
\[
P(\vv') = \overline{P}(\vv),
\]
where $\overline{P}$ is obtained from $P$ by replacing the variables $x_{i,1},\ldots,x_{i,d}$ with the signed maximal minors of the matrix
\[
X_{\{i_1,\ldots,i_{d-1}\}}.
\]
Because $I_G$ is closed under this substitution (Definition~\ref{gm}), we have $\overline{P}\in I_G$, and thus $P(\vv')=0$.  
Hence $\vv'\in\mathbb{V}(I_G)$.

Since now $|\vv'(0)| = m-1$, the inductive hypothesis gives $\tilde{\vv}\in U_G$ with $\|\tilde{\vv}-\vv'\|<\epsilon/2$. Therefore
\[
\|\tilde{\vv}-\vv\| \le \|\vv-\vv'\| + \|\vv'-\tilde{\vv}\| < \epsilon,
\]
as required.

\smallskip

{\bf Step 2.}   
Assume now that
\begin{equation}\label{rank is at most d-2}
\rank\{v_j : j\in N_G(i)\}\le d-2 \qquad\text{for all } i\in\vv(0).
\end{equation}
Because $G$ is a forest, we may order the vertices of $\vv(0)$ as $x_1,\ldots,x_m$ so that each $x_i$ is adjacent to at most one vertex among $\{x_1,\ldots,x_{i-1}\}$. We construct elements
\[
\vv'_1,\vv'_2,\ldots,\vv'_m \in \OR,
\]
where each $\vv'_{i}$ is a tuple $\vv'_{i}=(v_{i,1},\ldots,v_{i,n})\in (\CC^{d})^{n}$
such that
\[
\|\vv-\vv'_1\| < \frac{\epsilon}{m}
\qquad\text{and}\qquad
\|\vv'_{i+1}-\vv'_i\| < \frac{\epsilon}{m}.
\]

For $\vv'_1$, keep $v_{1,j}=v_j$ for $j\neq x_1$ and choose a nonzero  
\[
v_{1,x_1}\in \operatorname{span}\{v_j : j\in N_G(x_1)\}^{\perp},
\]
which exists by~\eqref{rank is at most d-2}, and choose it so that  
$\|\vv'_1-\vv\|<\epsilon/m$. Note that $\vv'_{1}\in \OR$.

Inductively, define $\vv'_i$ by modifying only the coordinate at $x_i$, choosing
\[
v_{i,x_i}\in \operatorname{span}\{v_{i-1,j}: j\in N_G(x_i)\}^{\perp}\setminus\{0\},
\]
which is possible because each $x_i$ has at most one neighbor among $\{x_1,\ldots,x_{i-1}\}$, and~\eqref{rank is at most d-2} ensures the relevant span has dimension at most $d-1$. Again choose $v_{i,x_i}$ so that  
$\|\vv'_{i}-\vv'_{i-1}\|<\epsilon/m$.

After $m$ steps we obtain $\vv'_m\in\OR$ with all entries nonzero, hence $\vv'_m\in U_G$. Moreover,
\[
\|\vv-\vv'_m\|
\le \|\vv-\vv'_1\| + \sum_{i=1}^{m-1}\|\vv'_{i+1}-\vv'_{i}\|
< \frac{\epsilon}{m}\cdot m
= \epsilon.
\]

Thus $\vv\in\overline{U_G}$, completing the proof.
\end{proof}

We now describe the defining ideals of the varieties $V_{S}$.

\begin{theorem}\label{thm: ideal of VS}
For any subset $S \subseteq [n]$, the ideal of $V_{S}$ is
\[
\mathbb{I}(V_{S}) \;=\; (x_{i,j} : i \in S, j\in [d]) \;+\; \sqrt{I_{G_{[n]\setminus S}}}.
\]
\end{theorem}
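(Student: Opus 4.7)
The plan is to recognise $V_S$ as a product variety and reduce the statement to Theorem~\ref{thm:IG-defines-UG} applied to the smaller forest $G_{[n]\setminus S}$.

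First, I would split the ambient space as $\mathbb{C}^{nd}=\mathbb{C}^{|S|d}\times \mathbb{C}^{(n-|S|)d}$ according to the coordinate partition $S\sqcup([n]\setminus S)$. Unpacking Definition~\ref{def: US and VS}, a tuple $\vv\in U(S)$ has $v_i=0$ for $i\in S$, $v_i\neq 0$ for $i\notin S$, and satisfies the orthogonality conditions of $\OR$. Every such condition involving a vertex of $S$ is automatic, since one of the two vectors is zero, so the only genuine constraints come from edges of $G_{[n]\setminus S}$. This yields the product description
\[
U(S) \;=\; \{0\}\times U_{G_{[n]\setminus S}},
\]
and since Zariski closure commutes with finite products of affine varieties, $V_S = \{0\}\times\overline{U_{G_{[n]\setminus S}}}$. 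Applying Theorem~\ref{thm:IG-defines-UG} to the forest $G_{[n]\setminus S}$ then identifies the second factor as $\mathbb{V}(I_{G_{[n]\setminus S}})$.

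From here, the formula is read off from the standard fact that the vanishing ideal of a product of affine varieties equals the sum of the (extensions of the) ideals of the factors: the ideal of $\{0\}^{|S|d}$ in the first factor is $(x_{i,j}:i\in S,\,j\in[d])$, while the ideal of $\overline{U_{G_{[n]\setminus S}}}$ in the second is $\sqrt{I_{G_{[n]\setminus S}}}$ by Theorem~\ref{thm:IG-defines-UG}. The one step that warrants care, and what I see as the main (if mild) obstacle, is verifying that the right-hand side is actually radical, since in general a sum of two radical ideals need not be radical. Here it works out because no variable $x_{i,j}$ with $i\in S$ appears in $I_{G_{[n]\setminus S}}$: the quotient $\mathbb{C}[X]/(x_{i,j}:i\in S,\,j\in[d])$ is naturally identified with $\mathbb{C}[x_{i,j}:i\notin S,\,j\in[d]]$, and the image of $\sqrt{I_{G_{[n]\setminus S}}}$ remains $\sqrt{I_{G_{[n]\setminus S}}}$ in that quotient, which is still radical. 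All the substantive commutative-algebra content has already been absorbed into Theorem~\ref{thm:IG-defines-UG}, so the remaining argument is essentially bookkeeping about product varieties and variable disjointness.
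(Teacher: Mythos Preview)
Your proposal is correct and follows essentially the same approach as the paper: identify $U(S)$ with $\{0\}\times U_{G_{[n]\setminus S}}$ via the product splitting, take closures, and invoke Theorem~\ref{thm:IG-defines-UG}. The paper's proof is a one-line sketch of exactly this argument; your extra care about why the sum $(x_{i,j}:i\in S)+\sqrt{I_{G_{[n]\setminus S}}}$ is radical (using variable disjointness) fills in a detail the paper leaves implicit.
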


\begin{proof}
This follows immediately from Theorem~\ref{thm:IG-defines-UG}, together with the observation that the restriction of $U(S)$ to the coordinates indexed by $[n]\setminus S$ is isomorphic to $U_{G_{[n]\setminus S}}$.
\end{proof}

\subsection{Primary decomposition of $L_{G}(d)$}

We now describe the primary decomposition of $L_{G}(d)$, which reflects the algebraic counterpart of the decomposition of $\OR$.  
Recall that our ideals live in the coordinate ring $\CC[X]$, where $X = (x_{i,j})$ is an $n \times d$ matrix of indeterminates, as introduced in Definition~\ref{def: LSS ideals}.

\begin{theorem}\label{thm: primary decomposition}
Let $G$ be a forest. The LSS ideal $L_{G}(d)$ has the primary decomposition
\[
L_{G}(d) \;=\; \bigcap_{S} \ \bigl( (x_{i,j} : i \in S, j\in [d]) \;+\; \sqrt{I_{G_{[n]\setminus S}}} \bigr),
\]
where the intersection ranges over all $G$-admissible subsets of $[n]$.
\end{theorem}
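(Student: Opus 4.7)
The plan is to deduce Theorem~\ref{thm: primary decomposition} from the irreducible decomposition in Theorem~\ref{main theorem} and the identification of the component ideals in Theorem~\ref{thm: ideal of VS}. Each summand on the right-hand side equals $\mathbb{I}(V_{S})$, which is prime because $V_{S}$ is irreducible. Applying the Nullstellensatz together with Theorem~\ref{main theorem} yields
\[
\bigcap_{S}\mathbb{I}(V_{S})\;=\;\mathbb{I}\Bigl(\bigcup_{S}V_{S}\Bigr)\;=\;\mathbb{I}(\OR)\;=\;\sqrt{L_{G}(d)},
\]
where $S$ ranges over the $G$-admissible subsets. Combined with the trivial inclusion $L_{G}(d)\subseteq\mathbb{I}(V_{S})$ (which holds for every admissible $S$ since $V_{S}\subseteq\mathbb{V}(L_{G}(d))$), the conclusion of Theorem~\ref{thm: primary decomposition} is therefore equivalent to the radicality statement $L_{G}(d)=\sqrt{L_{G}(d)}$. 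Once this is in place, primeness of each factor and Proposition~\ref{prop: maximal components} together ensure that the resulting expression is a genuine irredundant primary decomposition.

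The real work is thus to show that $L_{G}(d)$ is radical for every forest $G$ and every $d\geq 3$. My first attempt would be induction on the number of edges, removing a leaf $i$ with unique incident edge $e=\{i,j\}$. Writing $L_{G}(d)=L_{G\setminus e}(d)+(f_{e})$, the inductive hypothesis gives that $L_{G\setminus e}(d)$ is radical. Because $f_{e}=\sum_{k=1}^{d}x_{i,k}x_{j,k}$ is linear in the fresh variables $x_{i,1},\ldots,x_{i,d}$, with coefficients $x_{j,1},\ldots,x_{j,d}$ coming from the already-treated part of the forest, the key algebraic content is that adjoining this single element preserves reducedness. This requires a careful description of the zero-divisors of $\CC[X]/L_{G\setminus e}(d)$ along the direction $(x_{j,1},\ldots,x_{j,d})$, which can be controlled via the explicit component decomposition of Theorem~\ref{main theorem} applied to $G\setminus e$.

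An alternative route is to use the classical criterion that an ideal is radical if and only if the associated scheme is generically reduced along every component and has no embedded primes. Generic reducedness along $V_{S}$ can be tackled by computing the Jacobian of the generators $\{f_{e}:e\in E(G)\}$ at a point of $F(S)$ and verifying that its rank matches the expected codimension $d|S|+|E(G_{[n]\setminus S})|$ from Theorem~\ref{thm: dim of components}, with the faithful realization of Lemma~\ref{lem: nice conditions} supplying sufficiently generic points. Ruling out embedded primes is the harder half: since the components $V_{S}$ can have strictly different dimensions (as the formula in Theorem~\ref{thm: dim of components} shows), the ideal $L_{G}(d)$ is not a complete intersection in general, so the standard unmixedness arguments are unavailable. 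This is the main obstacle; overcoming it probably requires a global argument, such as exhibiting a squarefree Gröbner basis, or leveraging Theorem~\ref{thm:IG-defines-UG} together with a detailed analysis of how the iteratively constructed ideal $I_{G}$ of Definition~\ref{gm} relates to $L_{G}(d)$ near the lower-dimensional components.
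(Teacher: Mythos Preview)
Your first paragraph is exactly the paper's proof: reduce to Theorems~\ref{main theorem} and~\ref{thm: ideal of VS}, then invoke radicality of $L_{G}(d)$. The paper does not prove radicality, however; it simply cites \cite[Theorem~1.5]{conca2019lovasz}, which establishes that $L_{G}(d)$ is a radical complete intersection for every forest $G$ and every $d$. So the ``real work'' you identify has already been done in the literature, and everything after your first paragraph is unnecessary.

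Your attempted proofs of radicality are, as you acknowledge, incomplete. The leaf-induction sketch is in fact close to how Conca--Welker argue (they show more: the sequence $(f_{e})_{e\in E(G)}$ is regular when the edges are ordered compatibly with a leaf ordering, and each quotient is reduced), so if you wanted to make that self-contained you would be reproducing their argument. The Jacobian/embedded-prime route is a dead end for the reason you state, but note that your parenthetical ``$L_{G}(d)$ is not a complete intersection in general'' is actually false for forests: since $|E(G)|\le n-1$ and each $f_{e}$ cuts the dimension by one, $L_{G}(d)$ \emph{is} a complete intersection here, which is precisely why unmixedness is automatic and why the Conca--Welker result goes through.
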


\begin{proof}
The result follows by passing to ideals in the decomposition of
Equation~\eqref{irreducible irredundant}, together with the description of 
$\mathbb{I}(V_{S})$ given in Theorem~\ref{thm: ideal of VS}, and using that 
$L_{G}(d)$ is radical; see \textup{\cite[Theorem~1.5]{conca2019lovasz}}.
\end{proof}

\begin{example}
Consider the forest graph $G$ from Figure~\ref{forest graph} and let $d = 4$. 
As observed in Example~\ref{example: admissible subsets}, the $G$-admissible subsets are 
\[
\bigl\{\emptyset,\, \{2\},\, \{3\},\, \{5\},\, \{3,5\}\bigr\}.
\]
Hence, the primary decomposition of $L_{G}(4)$ is
\[
\begin{aligned}
L_G(4) =\; & \sqrt{I_G} \;\cap\; 
\bigl((x_{2,j} : j \in [4]) + \sqrt{I_{G \setminus \{2\}}}\bigr) \;\cap\;
\bigl((x_{3,j} : j \in [4]) + \sqrt{I_{G \setminus \{3\}}}\bigr) \\[2mm]
& \cap \;\bigl((x_{5,j} : j \in [4]) + \sqrt{I_{G \setminus \{5\}}}\bigr) \;\cap\;
\bigl((x_{3,j}, x_{5,j} : j \in [4]) + \sqrt{I_{G \setminus \{3,5\}}}\bigr).
\end{aligned}
\]
\end{example}

\section{Examples}\label{section 6}

In this section, we apply the results from the previous sections, particularly Theorems~\ref{main theorem} and~\ref{thm: dim of components}, to several classical families of forest graphs, including \emph{star graphs}, \emph{caterpillar trees}, and \emph{binary trees}.

\medskip
\textbf{Star graphs.}
A \emph{star graph} $G$ is the bipartite graph $K_{1,n-1}$ on vertices $\{1,\ldots,n\}$ with edge set 
\[
\{\{1,i\} : 2 \le i \le n\},
\]
see Figure~\ref{star graph}. To determine the irreducible decomposition of $\OR$, we must identify all $G$-admissible subsets $S \subseteq [n]$. Recall that $S$ is $G$-admissible if and only if for every $i \in S$,
\begin{equation}\label{condition admissible}
\bigl|N_G(i)\cap([n]\setminus S)\bigr| \ge d
\quad\text{and}\quad
N_G(i)\cap([n]\setminus S)\not\subset
N_G(j)\cap([n]\setminus S)
\ \text{for every } j\in [n]\setminus S.
\end{equation}
Since vertex $1$ is the only vertex of degree greater than one, the only possible $G$-admissible sets are $S=\emptyset$ and $S=\{1\}$. Moreover, by~\eqref{condition admissible}, the set $S=\{1\}$ is admissible only when
\[
|N_G(1)| = n-1 \;\ge\; d.
\]
Applying Theorem~\ref{main theorem}, we obtain:
\begin{itemize}
\item $\OR$ is irreducible if $d>n-1$;
\item $\OR$ has two irreducible components, $V_{\emptyset}$ and $V_{\{1\}}$, if $n-1 \ge d$.
\end{itemize}

Moreover, by Theorem~\ref{thm: dim of components}, we obtain
\[
\dim(V_{\emptyset}) = dn - (n-1)
\qquad\text{and}\qquad
\dim(V_{\{1\}}) = dn - d.
\]
Consequently,
\[
\dim(\OR) \;=\; \max\{\, dn - (n-1),\; dn - d \,\}.
\]

\begin{figure}[H]
    \centering
    \includegraphics[width=0.45\textwidth]{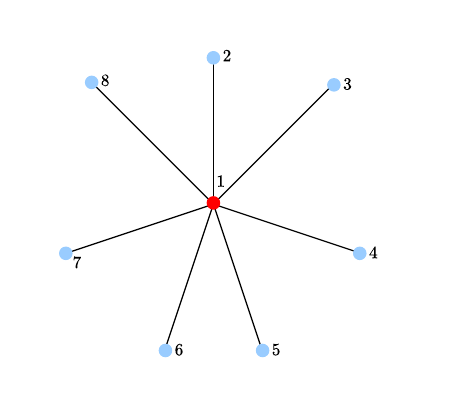}
    \caption{The star graph $K_{1,7}$.}
    \label{star graph}
\end{figure}

\begin{example}
Consider the star graph $G = K_{1,7}$ shown in Figure~\ref{star graph}.  
If $d>7$, then $\OR$ is irreducible;  
if $d \le 7$, then $\OR$ has exactly two irreducible components and 
\[\dim(\OR)=\max\{8d-7,7d\}.\]
\end{example}

\medskip
\textbf{Caterpillar trees.}
A \emph{caterpillar tree} is a tree in which every vertex lies at distance at most one from a designated central path, see Figure~\ref{caterpillar graph}.  
Let $G$ be a caterpillar tree on $[n]$ whose central path is the set $\{1,\ldots,k\}$.  
Every vertex $i\in\{k+1,\ldots,n\}$ is a leaf, adjacent to exactly one vertex of $[k]$.  
For each $i\in [k]$, define
\[
t_i \;=\; \bigl|N_G(i)\setminus [k]\bigr|.
\]
Then the caterpillar tree is completely determined (up to isomorphism) by the data
\[
k,\qquad t_1,\ldots,t_k.
\]
For example, for the caterpillar tree in Figure~\ref{caterpillar graph}, we have $k=9$ and 
$\{t_{1},\ldots,t_{9}\}=\{0,4,0,2,1,2,3,3,0\}$. To describe the irreducible decomposition of $\OR$, we must determine all $G$-admissible subsets $S\subseteq [n]$, that is, those satisfying~\eqref{condition admissible}.  
Since $d\ge 3$ and the only vertices of degree at least $3$ are those in $[k]$, necessarily
\[
S \subseteq [k].
\]
Furthermore, because each vertex in $\{k+1,\ldots,n\}$ is a leaf adjacent to a unique vertex of $[k]$, the admissibility condition~\eqref{condition admissible} simplifies to
\[
\bigl|N_G(i)\cap([n]\setminus S)\bigr| \;\ge\; d \qquad\text{for all } i\in S.
\]
For such $i\in S\subseteq [k]$ we may decompose
\[
N_G(i)\cap([n]\setminus S)
 = \bigl(N_G(i)\setminus [k]\bigr)
   \,\amalg\, \bigl(N_G(i)\cap([k]\setminus S)\bigr),
\]
and hence
\[
\bigl|N_G(i)\cap([n]\setminus S)\bigr|
  \;=\; t_i + \bigl|N_G(i)\cap([k]\setminus S)\bigr|.
\]

By Theorem~\ref{main theorem}, irreducible components of $\OR$ correspond exactly to those subsets $S\subseteq [k]$ such that
\begin{equation}\label{condition for admsis caterpillar}
t_i + \bigl|N_G(i)\cap([k]\setminus S)\bigr| \;\ge\; d
\qquad\text{for all } i\in S.
\end{equation}
This condition may be interpreted as requiring that, for each $i\in S$, at least one of the following holds:
\begin{enumerate}[label=(\arabic*), ref=(\arabic*)]
\item \label{cond 1} $t_i \ge d$,
\item \label{cond 2} $t_i = d-1$ and $i$ has at least one neighbor in $[k]\setminus S$,
\item \label{cond 3} $t_i = d-2$ and $i$ has exactly two neighbors in $[k]\setminus S$.
\end{enumerate}

Moreover, for any such $S$, Theorem~\ref{thm: dim of components} yields
\begin{equation}\label{dim for caterpillar}
\dim(V_S)
 \;=\;
 d(n-|S|)
 \;-\;
 |E(G_{[n]\setminus S})|
 \;=\;
 d(n-|S|)
 \;-\;
 |E(G_{[k]\setminus S})|
 \;-\;
 \sum_{i\in [k]\setminus S} t_i.
\end{equation}

\begin{figure}[H]
    \centering
    \includegraphics[width=0.9\textwidth]{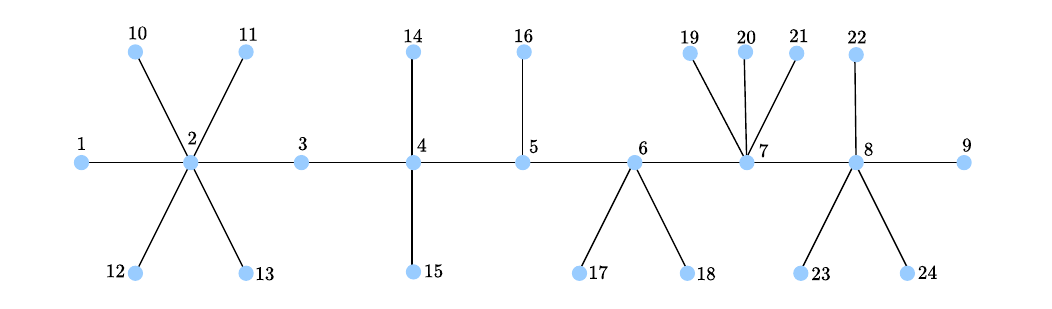}
    \caption{A caterpillar tree.}
    \label{caterpillar graph}
\end{figure}

\begin{example}
Consider the caterpillar tree $G$ from Figure~\ref{caterpillar graph} with vertex set $[24]$, and let $d=5$.  
Here the central path has length $k=9$ and 
\[
(t_1,\ldots,t_9)=(0,4,0,2,1,2,3,3,0).
\]
Using conditions~\ref{cond 1}–\ref{cond 3}, we find that $S$ must be a subset of $\{2,7,8\}$, since all other $i\in [9]$ satisfy $t_i<3=d-2$.  
Furthermore, $S$ cannot contain both $7$ and $8$, because each would then require two neighbors in $[k]\setminus S$, which is impossible.

Thus, the $G$-admissible subsets are
\[
\{\emptyset,\ \{7\},\ \{8\},\ \{2\},\ \{2,7\},\ \{2,8\}\},
\]
and so $\OR$ has exactly $6$ irreducible components.  
Finally, using~\eqref{dim for caterpillar}, these components have dimensions
\[
\begin{aligned}
&\dim(V_{\emptyset})=97,\qquad
 \dim(V_{\{7\}})=97,\qquad
 \dim(V_{\{8\}})=97,\\[2pt]
&\dim(V_{\{2\}})=98,\qquad
 \dim(V_{\{2,7\}})=98,\qquad
 \dim(V_{\{2,8\}})=98.
\end{aligned}
\]
\end{example}

\medskip
\textbf{Binary trees.}
A {\em binary tree} is a rooted tree in which each vertex has at most two children; see Figure~\ref{binary tree}.  
Let $G$ be a binary tree on $[n]$ and let $d\ge 3$.  
To describe the irreducible decomposition of $\OR$, we must determine all $G$-admissible subsets $S\subseteq [n]$, that is, those satisfying~\eqref{condition admissible}.  
Since every vertex of $G$ has degree at most three, if $d\ge 4$ then no vertex satisfies the admissibility condition, so the only admissible subset is $S=\emptyset$.  
By Theorem~\ref{main theorem}, it follows that $\OR$ is irreducible in this case.

We now consider the case $d=3$.  
Here, any admissible subset $S$ must be contained in the set of vertices of degree three.  
Moreover, for such an $S$, it is straightforward to check that~\eqref{condition admissible} is equivalent to requiring that
\[
N_{G}(i)\cap S=\emptyset 
\qquad\text{for all } i\in S.
\]
Thus, by Theorem~\ref{main theorem}, the irreducible components of $\3$ are in bijection with all subsets $S$ of vertices of degree three in which no two vertices are adjacent.  
For any such $S$, Theorem~\ref{thm: dim of components} gives
\begin{equation}\label{dim for binary tree}
\dim(V_S)
 \;=\;
 3(n-|S|)
 \;-\;
 |E(G_{[n]\setminus S})|
 \;=\;
 3(n-|S|)-(|E(G)|-3|S|)
 \;=\;
 2n+1,
\end{equation}
where the second equality uses that no two vertices of $S$ are adjacent, and the last equality uses $|E(G)|=n-1$.  
Hence, $\3$ is equidimensional, and every irreducible component has dimension $2n+1$.

\begin{figure}[H]
    \centering
    \includegraphics[width=0.5\textwidth]{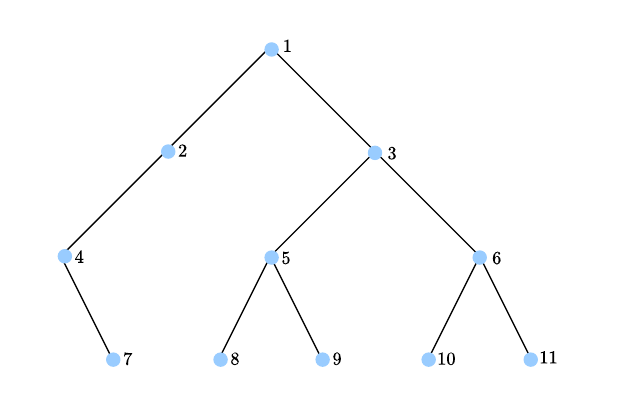}
    \caption{A binary tree.}
    \label{binary tree}
\end{figure}

\begin{example}
Consider the binary tree $G$ from Figure~\ref{binary tree} with vertex set $[11]$, and let $d=3$.  
The vertices of degree~$3$ are $\{3,5,6\}$.  
Hence, the $G$-admissible subsets are
\[
\{\emptyset,\{5\},\{6\},\{5,6\},\{3\}\},
\]
and thus $\3$ has exactly five irreducible components.  
All of them have dimension $23$.
\end{example}

\bibliographystyle{abbrv}
\bibliography{Citation}



\medskip
{\footnotesize\noindent {\bf Author's addresses}
\medskip

\noindent{Emiliano Liwski, 
KU Leuven}\hfill {\tt  emiliano.liwski@kuleuven.be}

\end{document}